\colorlet{cite}{black}
\tikzset{ 
  baseline=-2.3pt,
  text height=1.5ex, text depth=0.25ex,
  >=stealth,
  node distance=2cm,
  mid/.style={fill=white,inner sep=2.5pt},
}
\newtheoremstyle{mydef}
  {}		
  {}		
  {}		
  {}		
  {\scshape}	
  {. }		
  { }		
  {\thmname{#1}\thmnumber{ #2}\thmnote{ #3}}	
\newtheorem{theorem}{Theorem}[section]
\newtheorem*{theorem*}{Theorem}
\newtheorem{proposition}[theorem]{Proposition}
\newtheorem*{proposition*}{Proposition}
\newtheorem*{lemma*}{Lemma}
\newtheorem*{corollary*}{Corollary}
\theoremstyle{definition}
\newtheorem{definition}[theorem]{Definition}
\newtheorem{example}[theorem]{Example}
\newtheorem{notation}[theorem]{Notation}
\theoremstyle{remark}
\newtheorem{remark}[theorem]{Remark}
\newtheorem*{conjecture*}{Conjecture}
\DeclareMathOperator{\tr}{tr}
\DeclareMathOperator{\ad}{ad}
\DeclareMathOperator{\Ad}{Ad}
\DeclareMathOperator{\Nij}{Nij}
\author{Cristian Ortiz and Carlos Varea}
\subjclass[2010]{53D18; 22F30; 14M15}
\address{}
\address{Cristian Ortiz\newline
\indent Instituto de Matemática e Estatística\newline
\indent Universidade de São Paulo,\newline
\indent Rua do Matão, 1010 - Cidade Universitária,\newline
\indent 05508-090, São Paulo - São Paulo - Brasil.\newline
\indent cortiz@ime.usp.br}
\address{Carlos Varea\newline
\indent Departamento de Matemática\newline
\indent Universidade Tecnológica Federal do Paraná,\newline
\indent Av. Alberto Carazzai, 1640 - Centro,\newline
\indent 86300-000, Cornélio Procópio - Paraná - Brasil.\newline
\indent carlosvarea@utfpr.edu.br}
\title{Complex Dirac structures with constant real index on flag manifolds}
\begin{document}
\maketitle

\begin{abstract}
In this paper we describe all invariant complex Dirac structures with constant real index on a maximal flag manifold in terms of the roots of the Lie algebra which defines the flag manifold. We also completely classify these structures under the action of $B$-transformations. 
\end{abstract}

\tableofcontents

\section{Introduction}

A Dirac structure \cite{C} on a manifold $M$ is a subbundle $L \subset TM\oplus T^\ast M$ which is both maximal isotropic with respect to the natural symmetric pairing defined on $TM\oplus T^\ast M$ and involutive with respect to the Courant bracket. The natural pairing and the Courant bracket on $TM\oplus T^\ast M$ extend naturally to the complexification $(TM\oplus T^\ast M)\otimes \mathbb{C}$, allowing us to define complex Dirac structures. If  $L \subset (TM\oplus T^\ast M)\otimes \mathbb{C}$ is a complex Dirac structure and $x\in M$,  the \textbf{real index} of $L$ at $x$ is defined as $\dim (L_x \cap \overline{L_x})$. A generalized complex structure \cite{H, G1} on $M$ can be seen as a complex Dirac structure whose real index at every point is zero. Hence, generalized complex structures are particular instances of complex Dirac structures with constant real index. These were introduced in \cite{A,AR} including not only generalized complex structures but also presymplectic, transverse holomorphic and CR structures.

A diffeomorphism $\phi:M\to M$ lifts naturally to a vector bundle automorphism $\tilde{\phi}:(TM\oplus T^\ast M)\otimes \mathbb{C}\to (TM\oplus T^\ast M)\otimes \mathbb{C}$. One observes that if $\tilde{\phi}(L_x)=L_{\phi(x)}$, then the real index of $L$ at $x\in M$ coincides with the real index at $\phi(x)$. In particular, if $G$ is a Lie group acting on a manifold $M$ via $\phi_g:M\to M$ and $L\subseteq (TM\oplus T^\ast M)\otimes \mathbb{C}$ is an invariant complex Dirac structure, i.e.  $\tilde{\phi_g}(L_x)=L_{\phi_g(x)}$ for every $x\in M$ and $g\in G$, then the real index of $L$ is constant along the orbits of the action. As a consequence, every invariant complex Dirac structure on a homogeneous space has constant real index.

The study of geometric structures which are invariant by the action of a Lie group plays a central role in differential geometry, including: symplectic actions and moment maps, isometric actions, holomorphic actions, among others. Directly related to this work is the study of invariant geometric structures on homogeneous spaces such as K\"ahler structures \cite{Borel} and generalized complex structures on both nilmanifolds \cite{CG} and flag manifolds \cite{VS,V}.

In this paper we are concerned with invariant complex Dirac structures on maximal flag manifolds of complex semisimple Lie groups. Let $\mathfrak{g}$ be a complex semisimple Lie algebra and let $G$ be a connected Lie group with Lie algebra $\mathfrak{g}$. Then its maximal flag manifold is the homogeneous space $\mathbb{F} = G/P$ where $P$ is a Borel subgroup of $G$ (minimal parabolic subgroup). If $U$ is a compact real form of $G$, then $U$ acts transitively on $\mathbb{F}$ so that we also have the homogeneous space $\mathbb{F} = U/T$ where $T = P\cap U$ is a maximal torus of $U$. We are concerned with $U$-invariant structures on $\mathbb{F}$. In general, invariant structures on a flag manifold $\mathbb{F}$ are expected to be described in terms of roots of the Lie algebra which defines the flag manifold, see for instance \cite{BH1,BH2,SN}.

In \cite{VS}, the second author studied invariant generalized complex structures on $\mathbb{F}$. In this work, we show that the techniques of \cite{VS} naturally extend to the setting of invariant complex Dirac structures. The idea is to reduce the problem at the origin $b_0$ of $\mathbb{F}$, then an invariant complex Dirac structure on $\mathbb{F}$ will be completely described by a complex Dirac structure $L$ on the vector space $T_{b_0} \mathbb{F} \oplus T_{b_0} ^\ast \mathbb{F}$ which is invariant under the adjoint representation. We can decompose the Lie algebra of $U$ as $\mathfrak{u} = \mathfrak{t} \oplus \mathfrak{m}$ where $\mathfrak{t}$ is the Lie algebra of $T$ and $\mathfrak{m} = \sum _{\alpha} \mathfrak{u}_\alpha$ is the sum of root spaces in $\mathfrak{u}$, that is, $\mathfrak{u}_\alpha = (\mathfrak{g}_\alpha + \mathfrak{g}_{-\alpha})\cap \mathfrak{u}$ and $\mathfrak{g}_\alpha$ is the root space in the complex Lie algebra $\mathfrak{g}$. Thus we can identify $T_{b_0} \mathbb{F} \oplus T_{b_0} ^\ast \mathbb{F}$ with two copies of $\mathfrak{m}$, namely $T_{b_0} \mathbb{F} \oplus T_{b_0} ^\ast \mathbb{F} \approx \mathfrak{m}\oplus \mathfrak{m}^\ast$ where $\mathfrak{m}^\ast = \sum_\alpha \mathfrak{u}_\alpha ^\ast$. In the same spirit of \cite{VS}, we show Proposition \ref{rootdecomposition}, which says that every invariant complex Dirac structure on $\mathfrak{m}$ decomposes as a direct sum $L = \bigoplus _\alpha L_\alpha$ where each $L_\alpha$ is an invariant complex Dirac structure on $\mathfrak{u}_\alpha$.

Once we have an algebraic description of a complex Dirac structure, we proceed to analyze the involutivity with respect to the Courant bracket. We obtain algebraic conditions for a complex Dirac structure to be involutive depending on a triple of positive roots $(\alpha,\beta, \alpha+\beta)$. Thus, in order to simplify the computations we separate the cases according to the real index of the corresponding triple of subspaces $(L_\alpha,L_\beta,L_{\alpha+\beta})$.

As observed before, invariant complex Dirac structures on a homogeneous space always have constant real index. Our main results are the following.

\begin{theorem*}
Let $L$ be an invariant complex Dirac structure on a flag manifold $\mathbb{F}$. Then the real index of $L$ is constant and equal to $2k$ for some $0\leq k \leq l$ where $l$ is the number of positive roots of the associated Lie algebra $\mathfrak{g}$.  
\end{theorem*}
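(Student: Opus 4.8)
The plan is to combine the general observation already recorded in the introduction with the root-space decomposition $L=\bigoplus_\alpha L_\alpha$, and then to analyze a single summand $L_\alpha$ by representation theory of the torus $T$. The constancy of the real index is immediate: we noted that an invariant complex Dirac structure on a homogeneous space has constant real index, since the lift of the transitive $U$-action carries $L_x$ to $L_{x'}$ for any two points while preserving both the pairing and the conjugation. Hence it suffices to compute the real index at the origin $b_0$, where $L$ is an adjoint-invariant maximal isotropic subspace of $(\mathfrak{m}\oplus\mathfrak{m}^\ast)\otimes\mathbb{C}$; note that involutivity plays no role here, the real index being a purely linear-algebraic quantity.

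Next I would show that the real index splits over the positive roots. The compact-form conjugation $\sigma$ preserves each $\mathfrak{u}_\alpha=(\mathfrak{g}_\alpha+\mathfrak{g}_{-\alpha})\cap\mathfrak{u}$ (it interchanges $\mathfrak{g}_\alpha$ and $\mathfrak{g}_{-\alpha}$), hence it preserves $(\mathfrak{u}_\alpha\oplus\mathfrak{u}_\alpha^\ast)\otimes\mathbb{C}$; therefore $\overline{L}=\bigoplus_\alpha\overline{L_\alpha}$ and
\[
L\cap\overline{L}=\bigoplus_{\alpha>0}\bigl(L_\alpha\cap\overline{L_\alpha}\bigr),
\]
so the real index equals $\sum_{\alpha>0}\dim_{\mathbb{C}}(L_\alpha\cap\overline{L_\alpha})$, the sum running over the $l$ positive roots.

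The crux is to prove that each summand $\dim_{\mathbb{C}}(L_\alpha\cap\overline{L_\alpha})$ is either $0$ or $2$. Set $W_\alpha=L_\alpha\cap\overline{L_\alpha}$. Since $\overline{W_\alpha}=W_\alpha$, it is the complexification of the real subspace $E_\alpha=W_\alpha\cap(\mathfrak{u}_\alpha\oplus\mathfrak{u}_\alpha^\ast)$, so $\dim_{\mathbb{C}}W_\alpha=\dim_{\mathbb{R}}E_\alpha$. Because the adjoint action of $T\subseteq U$ commutes with $\sigma$, both $L_\alpha$ and $\overline{L_\alpha}$ are $T$-invariant, hence so is $E_\alpha$. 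Now $\mathfrak{u}_\alpha\oplus\mathfrak{u}_\alpha^\ast$ is a real $T$-module on which $T$ acts by the nontrivial rotation attached to $\alpha$ on each copy; it has no nonzero $T$-fixed vector and decomposes into $2$-dimensional irreducible $T$-submodules, so every $T$-invariant real subspace has even real dimension, giving $\dim_{\mathbb{R}}E_\alpha\in\{0,2,4\}$. Finally, maximal isotropy forces $\dim_{\mathbb{C}}L_\alpha=2$, and $W_\alpha\subseteq L_\alpha$ gives $\dim_{\mathbb{C}}W_\alpha\le 2$; this excludes the value $4$, leaving $\dim_{\mathbb{C}}(L_\alpha\cap\overline{L_\alpha})\in\{0,2\}$. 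Summing, the real index equals $2k$, where $k$ is the number of positive roots $\alpha$ with $\dim_{\mathbb{C}}(L_\alpha\cap\overline{L_\alpha})=2$, and plainly $0\le k\le l$.

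The step I expect to be the main obstacle is the last one: one must verify carefully that $T$-invariance is inherited by $W_\alpha$ (which rests on $\sigma$ commuting with the $T$-action, valid because $T\subseteq U$) and, above all, that no odd value can occur. The rotation-representation argument is precisely what rules out real index $1$; in carrying it out one should confirm that a $T$-invariant real subspace is genuinely a subrepresentation and that subrepresentations of a sum of two $2$-dimensional irreducibles have real dimension in $\{0,2,4\}$. As an alternative that sidesteps this, one can instead read the value of $\dim_{\mathbb{C}}(L_\alpha\cap\overline{L_\alpha})$ directly off the explicit classification of the $T$-invariant maximal isotropic subspaces $L_\alpha$ obtained by extending the techniques of \cite{VS}.
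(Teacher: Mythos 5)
Your proof is correct, and the decisive step is argued by a genuinely different route from the paper's. Both of you reduce to the origin, use the decomposition $L=\bigoplus_\alpha L_\alpha$ of Proposition \ref{rootdecomposition}, and observe that conjugation respects this decomposition, so everything hinges on showing $\dim_\mathbb{C}(L_\alpha\cap\overline{L_\alpha})\in\{0,2\}$. The paper gets this by brute force: it first lists all maximal isotropic subspaces of $(\mathfrak{u}_\alpha\oplus\mathfrak{u}_\alpha^\ast)\otimes\mathbb{C}$ (Proposition \ref{maximalisotropicsubspaces}), cuts the list down to the four $T$-invariant ones (Proposition \ref{listaDirac}), and then computes $L_\alpha\cap\overline{L_\alpha}$ case by case in the ``real index'' subsection. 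You instead give a parity argument: $W_\alpha=L_\alpha\cap\overline{L_\alpha}$ is conjugation-stable, hence the complexification of a real $T$-invariant subspace of $\mathfrak{u}_\alpha\oplus\mathfrak{u}_\alpha^\ast$, and since that real $T$-module is a sum of two nontrivial $2$-dimensional rotation representations with no fixed vectors, every subrepresentation has even real dimension; maximal isotropy caps the dimension at $2$. This is shorter, avoids the classification entirely, and is more robust (it would apply verbatim to any isotropy representation without trivial summands, e.g.\ in a partial flag setting), at the cost of not producing the explicit list of structures that the paper needs anyway for the involutivity and $B$-transformation analysis in later sections. One cosmetic point: the relevant conjugation is the one on $(\mathfrak{m}\oplus\mathfrak{m}^\ast)\otimes\mathbb{C}$ fixing the real form $\mathfrak{m}\oplus\mathfrak{m}^\ast$, and it preserves $(\mathfrak{u}_\alpha\oplus\mathfrak{u}_\alpha^\ast)\otimes\mathbb{C}$ simply because $\mathfrak{u}_\alpha\oplus\mathfrak{u}_\alpha^\ast$ is a real subspace; the remark about $\sigma$ swapping $\mathfrak{g}_{\alpha}$ and $\mathfrak{g}_{-\alpha}$ is not the operative fact, though it does not affect the argument.
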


Conversely, if $l$ is the number of positive roots of the Lie algebra $\mathfrak{g}$ which define the flag manifold $\mathbb{F}$, then there exists an invariant complex Dirac structure on $\mathbb{F}$ with constant real index equal to $2k$, with $k \in \{0,1,\cdots ,l\}$.

\begin{theorem*}
Let $\mathfrak{g}$ be a semisimple Lie algebra and consider $\mathbb{F}$ the maximal flag manifold associated to $\mathfrak{g}$. If $\mathfrak{g}$ has $l$ positive roots, then there exists an invariant complex Dirac structure on $\mathbb{F}$ with constant real index equal to $2k$, where $0\leq k \leq l$.
\end{theorem*}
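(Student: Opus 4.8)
The plan is to reduce everything to linear algebra at the origin $b_0$ and then exhibit, for each $k$, one explicit involutive model. By the reduction already in place, an invariant complex Dirac structure is the same as an $\mathrm{Ad}(T)$-invariant maximal isotropic involutive subspace $L\subseteq(\mathfrak{m}\oplus\mathfrak{m}^\ast)\otimes\mathbb{C}$, and such an $L$ splits as $L=\bigoplus_{\alpha>0}L_\alpha$ with each $L_\alpha$ maximal isotropic in the four-dimensional block $(\mathfrak{u}_\alpha\oplus\mathfrak{u}_\alpha^\ast)\otimes\mathbb{C}$. Since $L\cap\overline{L}=\bigoplus_\alpha(L_\alpha\cap\overline{L_\alpha})$ and each summand has complex dimension $0$ or $2$, the real index of $L$ equals $2k$, where $k$ is the number of positive roots $\alpha$ for which $L_\alpha$ has real index $2$. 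Thus the theorem amounts to producing, for every $0\le k\le l$, an involutive $L$ with exactly $k$ blocks of real index $2$.

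For the building blocks I would fix root vectors $E_{\pm\alpha}$ spanning $\mathfrak{u}_\alpha\otimes\mathbb{C}=\mathfrak{g}_\alpha\oplus\mathfrak{g}_{-\alpha}$ together with the dual covectors $E_{\pm\alpha}^\ast$, and use exactly two models per root: the complex-type block $L_\alpha^{0}=\mathbb{C}E_{-\alpha}\oplus\mathbb{C}E_\alpha^\ast$, for which $L_\alpha^0\cap\overline{L_\alpha^0}=0$ (real index $0$), and the cotangent block $L_\alpha^{2}=\mathbb{C}E_\alpha^\ast\oplus\mathbb{C}E_{-\alpha}^\ast$, which is self-conjugate (real index $2$). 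Both are manifestly $\mathrm{Ad}(T)$-invariant and isotropic. To select which roots carry which model I would fix a linear functional $h$ on the root lattice that is positive on the set $\Pi^+$ of positive roots and injective there, and let $\Theta$ be the $k$ positive roots of least $h$-value; then set $L_\alpha=L_\alpha^2$ for $\alpha\in\Theta$ and $L_\alpha=L_\alpha^0$ for $\alpha\in\Pi^+\setminus\Theta$. A direct weight count gives $L\cap\overline{L}=\bigoplus_{\alpha\in\Theta}(\mathbb{C}E_\alpha^\ast\oplus\mathbb{C}E_{-\alpha}^\ast)$, so $L$ has real index exactly $2k$; the extreme cases recover the complex-type generalized complex structure of \cite{VS} for $k=0$ and the cotangent Dirac structure $T^\ast\mathbb{F}$ for $k=l$.

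The main obstacle is involutivity, and it is genuine: assigning the models to an arbitrary $\Theta$ fails, since for a triple $(\alpha,\beta,\alpha+\beta)$ the bracket $[E_\alpha,E_\beta]\propto E_{\alpha+\beta}$ forces compatibility, and putting the tangent model on $\alpha,\beta$ but not on $\alpha+\beta$ already breaks closure. The device that rescues the construction is that the cotangent block $L_\alpha^2$ carries no tangent component and is therefore inert under the bracket. Feeding the above choice into the triple conditions obtained earlier, the only brackets that could leave $L$ are the vector–vector bracket $[E_{-\mu},E_{-\nu}]\propto E_{-(\mu+\nu)}$ with $\mu,\nu\in\Pi^+\setminus\Theta$, which lies in $L$ provided $\mu+\nu\in\Pi^+\setminus\Theta$, and the vector–covector bracket $[E_{-\mu},E_\gamma^\ast]\propto E_{\gamma+\mu}^\ast$, which must again be a covector of $L$. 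Here the $h$-ordering does all the work: if $\mu,\nu\notin\Theta$ then $h(\mu+\nu)=h(\mu)+h(\nu)$ is larger than $h(\mu)$, so $\mu+\nu\notin\Theta$; and whenever $\gamma=-\nu$ with $\nu\in\Theta$ and $\mu\notin\Theta$, one has $h(\mu-\nu)=h(\mu)-h(\nu)>0$, so $\mu-\nu$, if a root, is positive and $E_{\gamma+\mu}^\ast\in L$, while for $\gamma>0$ one trivially has $\gamma+\mu>0$. All remaining brackets are covector–covector and vanish. This establishes involutivity for every $k$, and with it the existence of an invariant complex Dirac structure of constant real index $2k$ for each $0\le k\le l$.
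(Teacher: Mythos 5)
Your proposal is correct, and it is the same strategy as the paper's: split $L$ into root blocks, put the cotangent block $(\mathfrak{u}_\alpha^\ast)_\mathbb{C}$ (real index two) on a set $\Theta$ of $k$ positive roots chosen as an initial segment of an additive order, so that $\alpha+\beta\in\Theta$ forces $\alpha,\beta\in\Theta$, and put a real-index-zero block on the remaining roots; the only cosmetic difference is that you order $\Pi^+$ by a generic positive linear functional rather than by height. The substantive point is your choice of the real-index-zero blocks. The paper's written proof assigns ``type 1'' (tangent) blocks to the roots outside $\Theta$, and taken literally this fails twice: every block would then have real index two, so the total real index would be $2l$ rather than $2k$, and the resulting triples $(2,1,1)$ and $(1,2,1)$ are not involutive, since they contain the combination $(A_\alpha^\ast,S_\beta,A_{\alpha+\beta})$ whose Nijenhuis value is nonzero, and indeed these triples are absent from the table of involutive real-index-six triples in Section \ref{risix}. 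The phrase ``with same sign'' in the paper's proof indicates that the intended blocks are the complex-type ones (type 3), which is exactly your $\mathbb{C}E_{-\alpha}\oplus\mathbb{C}E_\alpha^\ast = \mathrm{span}_{\mathbb{C}}\{A_\alpha+iS_\alpha, A_\alpha^\ast+iS_\alpha^\ast\}$. With a uniform sign, the triples your ordering produces, namely $(3,3,3)$, $(2,3,3)$, $(3,2,3)$, $(2,2,3)$ and $(2,2,2)$, are all involutive by Table \ref{tableinv} and Propositions \ref{xcc} and \ref{2dualspaces}, and your direct bracket argument (involutivity of the distribution $E=\bigoplus_{\mu\notin\Theta}\mathfrak{g}_{-\mu}$ together with $L=E\oplus\mathrm{Ann}(E)$) is a clean independent check of the same fact. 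So your argument is the correct reading of the paper's construction, and it is justified more carefully than the paper's own text.
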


It is well-known that the vector bundle $(TM\oplus T^\ast M)\otimes \mathbb{C}$ has additional symmetries which do not come from a diffeomorphism of the base $M$, namely symmetries given by $B$-fields. Bringing $B$-field invariance into the picture allows us to fully describe all invariant complex Dirac structures up to $B$-fields. Using Proposition \ref{rootdecomposition}, which gives a decomposition of an invariant complex Dirac structure in terms of $\mathfrak{u}_\alpha$, one can show the following result.

\begin{proposition*}
Let $L_\alpha$ be an invariant complex Dirac structure on $\mathfrak{u}_\alpha$. Then, up to $B$-transformations, we have only four possibilities for $L_\alpha$.
\begin{itemize}
\item[(a)] $L_\alpha = (\mathfrak{u}_\alpha)_\mathbb{C}$, that is, $L_\alpha$ is the root space associated to the root $\alpha$;

\item[(b)]  $L_\alpha = (\mathfrak{u}^\ast _\alpha)_\mathbb{C}$, that is, $L_\alpha$ is the dual of the root space associated to the root $\alpha$;

\item[(c)] $L_\alpha = \textnormal{span}_\mathbb{C}\{ A_\alpha + \varepsilon_\alpha iS_\alpha , A_\alpha ^\ast + \varepsilon_\alpha i S^\ast _\alpha \}$ with $\varepsilon_\alpha = \pm 1$, that is, $L_\alpha$ is the $i$-eigenspace of an invariant generalized complex structure on $\mathfrak{u}_\alpha$ of complex type.

\item[(d)] $L_\alpha = \textnormal{span}_\mathbb{C}\{ A_\alpha + \frac{i}{x_\alpha} A_\alpha ^\ast, S_\alpha + \frac{i}{x_\alpha} S^\ast _\alpha \}$ with $x_\alpha,a_\alpha \in \mathbb{R}$ and $x_\alpha \neq 0$, that is, $L_\alpha$ is the $i$-eigenspace of an invariant generalized complex structure on $\mathfrak{u}_\alpha$ of symplectic type.
\end{itemize}
In other words, if we denote by $\textnormal{Dir}_\mathbb{C} (\mathfrak{u}_\alpha)$ the set of all invariant complex Dirac structures on $\mathfrak{u}_\alpha$, then 
\[
\frac{\textnormal{Dir}_\mathbb{C}(\mathfrak{u}_\alpha)}{B\textnormal{-fields}} = \{ (a),(b),(c),(d)\}.
\]
\end{proposition*}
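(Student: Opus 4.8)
The plan is to classify invariant maximal isotropic subspaces of $(\mathfrak{u}_\alpha \oplus \mathfrak{u}_\alpha^\ast)_\mathbb{C}$ stratum by stratum according to their projection onto the tangent factor, and then to quotient each stratum by the (small) group of invariant $B$-fields. First I would exploit the isotropy action of the torus: since $(\mathfrak{u}_\alpha)_\mathbb{C} = \mathfrak{g}_\alpha \oplus \mathfrak{g}_{-\alpha}$ carries the two distinct weights $\pm\alpha$, the four-dimensional space $W \ce (\mathfrak{u}_\alpha \oplus \mathfrak{u}_\alpha^\ast)_\mathbb{C}$ splits into two weight spaces $W_{\pm\alpha}$, each two-dimensional, isotropic for the canonical pairing, and dual to one another. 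Any invariant $L_\alpha$ therefore decomposes as $L_\alpha = (L_\alpha \cap W_\alpha) \oplus (L_\alpha \cap W_{-\alpha})$, so the only possible types are $(\dim L_\alpha \cap W_\alpha, \dim L_\alpha \cap W_{-\alpha}) \in \{(2,0),(1,1),(0,2)\}$.

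A cleaner bookkeeping device is the tangent projection $E \ce \pi_{\mathfrak{u}_\alpha}(L_\alpha) \subseteq (\mathfrak{u}_\alpha)_\mathbb{C}$ together with the induced $2$-form $\varepsilon \in \wedge^2 E^\ast$, for which $L_\alpha = \{X + \xi : X \in E,\ \xi|_E = \varepsilon(X)\}$. Invariance of $L_\alpha$ forces $E$ to be an invariant subspace of $(\mathfrak{u}_\alpha)_\mathbb{C}$, hence $E \in \{0, \mathfrak{g}_\alpha, \mathfrak{g}_{-\alpha}, (\mathfrak{u}_\alpha)_\mathbb{C}\}$. I would then read off the raw structures: $E = 0$ gives $L_\alpha = (\mathfrak{u}_\alpha^\ast)_\mathbb{C}$, which is (b); each one-dimensional weight line forces $\varepsilon = 0$ (as $\wedge^2 E^\ast = 0$) and yields $L_\alpha = E \oplus \Ann(E) = W_{\pm\alpha}$, which is (c) with the two signs $\varepsilon_\alpha = \pm 1$; and $E = (\mathfrak{u}_\alpha)_\mathbb{C}$ gives the graph of a $2$-form $\varepsilon = z\,\omega_0$, where $\omega_0$ is the (unique up to scale) invariant area form on $\mathfrak{u}_\alpha$ and $z \in \mathbb{C}$.

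Next I would bring in the $B$-fields. The key observation is that invariant real $2$-forms on the two-dimensional space $\mathfrak{u}_\alpha$ form a single real line $\mathbb{R}\cdot\omega_0$, so the group of invariant $B$-fields is one-dimensional and acts by $e^B$ preserving the tangent projection $E$; consequently it preserves each of the strata above and fixes the structures with $E = 0$ or $\dim E = 1$. Thus (b) and (c) are rigid under $B$-fields. On the top stratum $E = (\mathfrak{u}_\alpha)_\mathbb{C}$ the $B$-field $B = t\,\omega_0$ acts by $z \mapsto z + t$ with $t \in \mathbb{R}$, so the imaginary part $\im z$ is a complete invariant: when $\im z = 0$ one normalizes to $z = 0$, recovering $L_\alpha = (\mathfrak{u}_\alpha)_\mathbb{C}$, which is (a); when $\im z \neq 0$ the form $\varepsilon$ is a nonzero imaginary multiple of a real symplectic form, that is, $L_\alpha$ is the $i$-eigenspace of a generalized complex structure of symplectic type, which is (d) with $x_\alpha$ determined by $(\im z)^{-1}$. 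Collecting the strata produces exactly the four classes $\{(a),(b),(c),(d)\}$.

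The main obstacle I anticipate is the top stratum. Two points need care there: verifying that the invariant $B$-fields really constitute only the one-parameter family $\mathbb{R}\cdot\omega_0$ (so that $\im z$ is genuinely a complete, and not merely a partial, invariant), and matching the intrinsic normal forms $z = 0$ and $z = is$ to the explicit spanning vectors of the statement written in the basis $\{A_\alpha, S_\alpha, A_\alpha^\ast, S_\alpha^\ast\}$, including the correct sign $\varepsilon_\alpha$ in (c) and the reality of $x_\alpha$ in (d). Everything else is a finite, essentially representation-theoretic, enumeration forced by invariance.
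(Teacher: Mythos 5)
Your proof is correct, and it reaches the classification by a genuinely different route from the paper. The paper's proof takes as input the list of invariant maximal isotropic subspaces already established in Section \ref{sectionDirac} (obtained there by the brute-force fourteen-case enumeration of Proposition \ref{maximalisotropicsubspaces} followed by the invariance check of Proposition \ref{listaDirac}), and then merges items of that list by computing the Ag\"uero--Rubio invariants: cases 1 and 4.1 of (\ref{listaDirac2}) share real index $2$, order $0$ and type $0$, after which the identifying $B$-field is exhibited exactly as you do, via $e^B\cdot L(E,\varepsilon)=L(E,\varepsilon+\iota^\ast B)$ with $\varepsilon$ a real multiple of the area form; case 2 is separated by having order $2$; and cases 3 and 4.2 are disposed of by citing \cite{GVV}. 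You instead re-derive the whole list from the weight decomposition $W=W_\alpha\oplus W_{-\alpha}$, stratify by the range $E=\pi_{\mathfrak{u}_\alpha}(L_\alpha)$, and compute the $B$-field orbits directly on each stratum. Your approach is more self-contained and conceptual: the representation-theoretic splitting replaces the paper's case analysis, no appeal to the order/type invariants or to \cite{GVV} is needed, and the explicit orbit computation $z\mapsto z+t$ on the top stratum proves slightly more than the paper does, namely that $\im z$ separates $B$-field orbits, so the members of family (d) with different $x_\alpha$ are genuinely inequivalent (the coarse invariants used in the paper cannot distinguish them). What the paper's route buys is compatibility with the general classification scheme of \cite{AR}, of which this proposition is presented as an instance. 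The two points you flag as delicate are indeed the only ones and both are immediate: $\wedge^2\mathfrak{u}_\alpha^\ast$ is one-dimensional and preserved by the rotation (determinant one), so the invariant $B$-fields are exactly $\mathbb{R}\cdot\omega_0$; and the normal form $\varepsilon=is\,\omega_0$ with $s\neq 0$ matches case 4.2 of (\ref{listaDirac2}) after normalizing its coefficient ratio to be purely imaginary, which is precisely the passage from 4.2 to item (d) of the statement.
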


The paper is organized as follows. In Section \ref{sectionpreliminar} we present a brief introduction to Dirac structures and, as a particular case, we define generalized complex structures. These are the basic concepts used throughout the paper. 

In Section \ref{sectionflag} we introduce the concept of flag manifolds and fix notation that will be useful in the paper.

In Section \ref{sectionDirac} we recall the classification of invariant generalized complex structures on a maximal flag manifold as in \cite{VS}. Since generalized complex structures are a special case of complex Dirac structures, we use this classification as a motivation for the study of general invariant complex Dirac structures on a maximal flag. Thus we give a description of the invariant complex Dirac structures on $\mathbb{F}$. To obtain such a  description we first describe the maximal isotropic subspaces of $\mathfrak{u}_\alpha$, for a fixed root $\alpha$. Then we analyze the invariance and the real index of them and, finally, we study the involutivity by considering different cases according to the real index. 





In Section \ref{sectionBtransformation} we study the action of $B$-transformations on the space of invariant complex Dirac structures found in Section \ref{sectionDirac}, presenting a classification of the possible invariant complex Dirac structures on flag manifolds up to $B$-transformations. 

Finally, Section \ref{secexample} is devoted to give explicit examples of invariant complex Dirac structures for the maximal flag manifolds associated to the Lie algebras $\mathfrak{sl}(2,\mathbb{C})$ and $\mathfrak{sl}(3,\mathbb{C})$.\\

\textbf{Acknowledgments:} We would like to thank Dan Aguero and Fabricio Valencia for useful conversations. We are specially thankful to Elizabeth Gasparim and Roberto Rubio for several suggestions and comments that have improved this manuscript. C. Ortiz was supported by Grant 2016/23746-6 Sao Paulo Research Foundation - FAPESP  and by the National Council of Research and Development CNPq-Brazil, Bolsa de Produtividade em Pesquisa Grant 315502/2020-7. C. Varea was supported by the Sao Paulo Research Foundation - FAPESP Grant 2020/12018-5.


\section{Dirac structures and generalized complex structures}\label{sectionpreliminar}

In this section we introduce the basics on Dirac structures and generalized complex structures. For more details see \cite{Bu}, \cite{C} and \cite{G2}.


Let $M$ be a smooth $n$-dimensional manifold. The vector bundle $TM\oplus T^{\ast }M$ is endowed with a natural symmetric bilinear
form of signature $(n,n)$ defined by 
\begin{equation*}
\langle X+\xi ,Y+\eta \rangle =\frac{1}{2}(\xi (Y)+\eta (X)),
\end{equation*}%
where $X+\xi,Y+\eta \in \Gamma (TM\oplus T^\ast M)$. The Courant bracket is the skew-symmetric bracket defined on $\Gamma(TM\oplus T^{\ast }M)$ by 
\begin{equation*}
\lbrack X+\xi ,Y+\eta ]=[X,Y]+\left( \mathcal{L}_{X}\eta -\mathcal{L}_{Y}\xi -\frac{%
1}{2}d\left( i_{X}\eta -i_{Y}\xi \right)\right) .
\end{equation*}

\begin{definition}(\cite{C})
A {\bf Dirac structure} on $M$ is a maximal isotropic subbundle $L\subseteq TM\oplus T^\ast M$, whose space of sections is closed under the Courant bracket. 
\end{definition}

\begin{example} Examples of Dirac structures are:
\begin{itemize}
\item[a)] Presymplectic structures: let $\omega \in \Omega (M)$ be a nondegenerate closed $2$-form. Then
\[
L_\omega = \{ X+\iota_X \omega \ | \ X\in TM\}
\]
is a Dirac structure.

\item[b)] Poisson structures: let $\pi$ be a Poisson structure. Then 
\[
\textnormal{Graph}(\pi) = \{ \iota_\xi \pi+ \xi \ | \ \xi \in T^\ast M\}
\]
is a Dirac structure.
\end{itemize}
\end{example}

It is important to notice that these notions can be extended to the complexification $(TM\oplus T^\ast M)\otimes \mathbb{C}$ which allows us to define a complex Dirac structure on $M$.

\begin{definition}
A {\bf complex Dirac structure} on $M$ is an involutive maximal isotropic subbundle $L\subseteq (TM\oplus T^\ast M)\otimes \mathbb{C}$.
\end{definition}

Given $L$ a complex Dirac structure on $M$ and $p\in M$, we note that $L|_p$ is a complex Dirac structure on $T_p M$.  

\begin{definition}\label{realindex}
The {\bf real index} of a complex Dirac structure $L|_p \subset (T_p M\oplus T_p ^\ast M)\otimes \mathbb{C}$ is given by $\dim_\mathbb{C} (L|_p \cap \overline{L|}_p)$.
\end{definition}

A very special class of complex Dirac structures is given by those with real index zero, called {\bf generalized complex structures}. The condition $L\cap \overline{L} = \{0 \}$ allows us to describe a generalized complex structure as an endomorphism $\mathcal{J}\colon TM \oplus T^\ast M \rightarrow TM \oplus T^\ast M$ satisfying $\mathcal{J}^2 = -1$ and $\mathcal{J}^\ast = -\mathcal{J}$. 

The basic examples of generalized complex structures on a manifold $M$ are: 
\begin{equation*}
\mathcal{J}_{J}=\left( 
\begin{array}{cc}
-J & 0 \\ 
0 & J^{\ast }%
\end{array}%
\right) \ \mathrm{and}\ \mathcal{J}_{\omega }=\left( 
\begin{array}{cc}
0 & -\omega ^{-1} \\ 
\omega & 0%
\end{array}%
\right)
\end{equation*}%

\noindent where $J:TM\to TM$ is a complex structure and $\omega\in\Omega^2(M)$ is a symplectic form.

Note that, given $L$ a maximal isotropic subbundle of $TM\oplus T^{\ast }M$
(or its complexification) then $L$ is Courant involutive if and only if $\Nij%
|_{L}=0$, where  
\begin{equation*}
\Nij(A,B,C)=\frac{1}{3}\left( \langle \lbrack A,B],C\rangle +\langle \lbrack
B,C],A\rangle +\langle \lbrack C,A],B\rangle \right),
\end{equation*}

\noindent is the Nijenhuis operator.

\section{Flag manifolds}\label{sectionflag}

The aim of this section is to briefly review the concept of a flag manifold. Let $\mathfrak{g}$ be a semisimple Lie algebra and $G$ be a connected Lie group with Lie algebra $\mathfrak{g}$. Let us consider a Cartan subalgebra $\mathfrak{h}\subseteq \mathfrak{g}$ and $\Pi$ be a root system of $\mathfrak{g}$ relative to $\mathfrak{h}$. For each root $\alpha \in \Pi$, denote by $\mathfrak{g}_\alpha = \{ X\in \mathfrak{g} \ | \ \ad_\mathfrak{h} (X) = \alpha(\mathfrak{h})\cdot X \}$ the root space associated to $\alpha$. The Cartan--Killing form of $\mathfrak{g}$ is defined by $\langle X,Y \rangle = \tr (\ad_X \cdot \ad_Y )$ and it restricts to a nondegenerate bilinear form on $\mathfrak{h}$. Given $\alpha \in \mathfrak{h}^\ast$ we denote by $H_\alpha \in \mathfrak{h}$ the element which is defined by $\alpha ( \cdot ) = \langle H_\alpha , \cdot \rangle$, and denote by $\mathfrak{h}_\mathbb{R} \subseteq \mathfrak{h}$ the real subspace generated by $H_\alpha$, $\alpha \in \Pi$.

Let us choose a Weyl basis of $\mathfrak{g}$. Such a basis is given by root vectors  $X_\alpha \in \mathfrak{g}_\alpha$ satisfying $\langle X_\alpha,X_{-\alpha} \rangle = 1$ and 
$$[X_\alpha,X_\beta ] = \left\lbrace \begin{array}{l}
m_{\alpha,\beta} X_{\alpha+\beta}, \ \textnormal{ if } \alpha +\beta \ \textnormal{ is a root} \\
0, \ \textnormal{ otherwise}
\end{array}\right.$$ with $m_{\alpha,\beta} \in \mathbb{R}$ obeying $m_{-\alpha,-\beta} = -m_{\alpha,\beta}$.

Let $\Pi^+ \subset \Pi$ be a choice of positive roots and denote by $\Sigma$ the corresponding simple root system. We define the {\bf Borel subalgebra} $\mathfrak{b}$ of $\mathfrak{g}$ by
\[
\mathfrak{b} = \mathfrak{h} \oplus \sum_{\alpha \in \Pi^+} \mathfrak{g}_\alpha.
\]
A subalgebra $\mathfrak{p}$ of $\mathfrak{g}$ is {\bf parabolic} if $\mathfrak{b} \subseteq \mathfrak{p}$. In particular, when $\mathfrak{p} = \mathfrak{b}$ we say that the parabolic subalgebra is {\bf minimal}.


\begin{definition}
Let $\mathfrak{p}$ be a minimal parabolic subalgebra. The {\bf maximal flag manifold} $\mathbb{F}$ associated to $\mathfrak{p}$ is the homogeneous space $\mathbb{F} = G/P$, where $P\subseteq G$ is the parabolic subgroup generated by $\mathfrak{p}$. 
\end{definition}


We also have a realization of a flag manifold using a compact real form of $G$. Indeed, let $\mathfrak{u}$ be a compact real form of $\mathfrak{g}$, that is, the real subalgebra 
\[
\mathfrak{u} = \textnormal{span}_\mathbb{R} \{ i\mathfrak{h}_\mathbb{R},A_\alpha, S_\alpha \ : \ \alpha\in \Pi^+ \},
\]
where $A_\alpha = X_\alpha - X_{-\alpha}$ and $S_\alpha = i(X_\alpha + X_{-\alpha})$. Denote by $U = \exp \mathfrak{u}$ the corresponding compact real form of $G$. Then the real representation $\mathbb{F} = U/T$ is obtained by the transitive action of $U$ on $G/P$, where the closed connected subgroup $T = P \cap U$ can be identified with a maximal torus on $U$. The Lie algebra of $T$ is $\mathfrak{t} = \mathfrak{p} \cap \mathfrak{u}$.  Let $\mathfrak{m}$ be the orthogonal complement (with respect to the Killing form) of $\mathfrak{t}$ in $\mathfrak{u}$. Since $\mathbb{F}$ is a reductive homogeneous space, its tangent space at the origin can be identified with $\mathfrak{m}$, that is, $T_{b_0} \mathbb{F} \simeq \mathfrak{m}$ where $b_0 = e\cdot T$ is the origin of the flag manifold. 
Throughout the text we will consider only maximal flag manifolds, so when we consider $\mathbb{F}$ a flag manifold it is understood that $\mathbb{F}$ is a maximal flag manifold.

\section{Complex Dirac structures on flag manifolds}\label{sectionDirac}

Our goal in this section is to describe the invariant complex Dirac structures on a maximal flag manifold $\mathbb{F} = U/T$. 

\begin{definition}\label{definvariance}
Let $L$ be a complex Dirac structure on $\mathbb{F}$. We say that $L$ is {\bf invariant} if $L$ is invariant under action of the adjoint representation. That is, 

$$(\Ad (g) \oplus \Ad ^\ast (g) )\cdot L \subseteq L,$$

\noindent for every $g \in T$.
\end{definition}

\begin{example}[Invariant generalized complex structures on $\mathbb{F}$]

In \cite{VS} was proved that an invariant generalized almost complex structure $\mathcal{J}$ on $\mathbb{F}$ can be decomposed as $\mathcal{J} = \bigoplus_{\alpha} \mathcal{J}_\alpha$ where $\mathcal{J}_\alpha$ is the restriction of $\mathcal{J}$ to the subspace $\mathfrak{u}_\alpha \oplus \mathfrak{u}_\alpha ^\ast$, where $\mathfrak{u}_\alpha = \textnormal{span}_\mathbb{R}\{A_\alpha, S_\alpha \}$, and each $\mathcal{J}_\alpha$ can assume only two forms:
\begin{itemize}
\item[a)] Complex type
\[
\mathcal{J}_\alpha = \pm \mathcal{J}_0=\pm \left( 
\begin{array}{cccc}
0 & -1 & 0 & 0\\ 
1 & 0 & 0 & 0\\
0 & 0 & 0 & -1\\
0 & 0 & 1 & 0
\end{array}%
\right).
\]

\item[b)] Symplectic type (up to $B$-transformation)
\[
\mathcal{J}_\alpha=\left( 
\begin{array}{cccc}
a_\alpha & 0 & 0 & -x_\alpha\\ 
0 &  a_\alpha& x_\alpha & 0\\
0 & -y_\alpha & -a_\alpha & 0\\
y_\alpha & 0 & 0 & -a_\alpha
\end{array}%
\right)
\]

\noindent with $a_\alpha,x_\alpha,y_\alpha\in\mathbb{R}$ such that $a_\alpha^2=x_\alpha y_\alpha-1$.
\end{itemize}
The integrability of the invariant generalized almost complex structures described above depends on analyzing what happens with triples of the form $(\mathcal{J}_\alpha,\mathcal{J}_\beta,\mathcal{J}_{\alpha+\beta})$
associated to triples of positive roots $(\alpha,\beta,\alpha+\beta)$. 
Accordingly, we obtain that $\mathcal{J}$ is integrable if and only if for each triple of positive roots $(\alpha,\beta,\alpha+\beta)$ we have that $(\mathcal{J}_\alpha,\mathcal{J}_\beta,\mathcal{J}_{\alpha+\beta})$ 
corresponds to one of the rows of the following table

\begin{table}[htb!]
\begin{center}
\begin{tabular}{|c|c|c|}
\hline $\mathcal{J}_\alpha$ & $\mathcal{J}_\beta$ & $\mathcal{J}_{\alpha + \beta}$ \\
\hline
complex & complex & complex \\
symplectic  & complex ($\pm \mathcal{J}_0 $) & complex ($\pm \mathcal{J}_0)$ \\
complex ($\pm \mathcal{J}_0$) & symplectic & complex ($\pm \mathcal{J}_0$) \\
complex ($\pm \mathcal{J}_0)$ & complex ($\mp \mathcal{J}_0)$ & symplectic\\
symplectic & symplectic & symplectic \\ \hline
\end{tabular} 
\caption{Integrability conditions.}
\label{Jintegravel}
\end{center}
\end{table}
\FloatBarrier
 
\noindent where, in the first row, we can have one of the following combination of signs $(\pm \mathcal{J}_0, \pm \mathcal{J}_0, \pm \mathcal{J}_0)$, $(\pm \mathcal{J}_0, \mp \mathcal{J}_0, \pm \mathcal{J}_0)$ and $(\pm \mathcal{J}_0, \mp \mathcal{J}_0, \mp \mathcal{J}_0)$. Moreover, in the last row, the following extra condition is required for integrability:
\begin{equation}\label{Integrabilityconditions}
\left\lbrace \begin{array}{cc}
a_{\alpha+\beta} x_\alpha x_\beta - a_\beta x_\alpha x_{\alpha+\beta} - a_\alpha x_\beta x_{\alpha+\beta} = 0 \\
x_\alpha x_\beta - x_\alpha x_{\alpha+\beta} - x_\beta x_{\alpha+\beta} = 0.
\end{array}\right.
\end{equation} 
For more details see \cite{VS}.
\end{example}

Our main goal is to generalize the results from invariant generalized complex structures \cite{VS} to the setting of complex Dirac structures with constant real index. To this end, we start by recalling the following result from \cite{VS}.

\begin{proposition}\label{rootdecomposition}
Let $L$ be a subspace of $\mathfrak{m}\oplus \mathfrak{m}^\ast$. There is a natural decomposition 
\[
L=\sum _{\alpha} L\cap (\mathfrak{u}_\alpha \oplus \mathfrak{u}_\alpha ^\ast ).
\]
Moreover, $L$ is a Dirac structure on $\mathfrak{m}$ if and only if $L_\alpha := L\cap (\mathfrak{u}_\alpha \oplus \mathfrak{u}_\alpha ^\ast)$ is a Dirac structure on $\mathfrak{u}_\alpha$ for each positive root $\alpha$. 
\end{proposition}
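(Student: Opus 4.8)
The plan is to treat ``Dirac structure on $\mathfrak{m}$'' at the linear level, that is, as a maximal isotropic subspace of $\mathfrak{m}\oplus\mathfrak{m}^\ast$ for the natural pairing: at a single point there is no Courant bracket to impose, so involutivity is handled separately through the triples $(\alpha,\beta,\alpha+\beta)$, where it genuinely couples distinct roots and therefore cannot decouple as in the statement. Throughout I use that $L$ is invariant, so that $\ad_H$ preserves $L$ for every $H\in\mathfrak{t}$; this is exactly what makes the ``natural'' decomposition hold, and for a generic non-invariant subspace the displayed identity already fails.

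For the decomposition I would first identify $\mathfrak{m}\oplus\mathfrak{m}^\ast=\bigoplus_\alpha(\mathfrak{u}_\alpha\oplus\mathfrak{u}_\alpha^\ast)$ as the splitting of the $T$-representation into isotypic components. Writing $H=iH'$ with $H'\in\mathfrak{h}_\mathbb{R}$, a direct computation gives $\ad_H A_\alpha=\alpha(H')\,S_\alpha$ and $\ad_H S_\alpha=-\alpha(H')\,A_\alpha$, so $T$ acts on $\mathfrak{u}_\alpha$, and dually on $\mathfrak{u}_\alpha^\ast$, as a rotation whose complexified weights are $\{\pm\alpha\}$. For distinct positive roots $\alpha\neq\beta$ these are inequivalent $T$-representations: choosing a regular $H'$, off the finitely many hyperplanes $\ker(\alpha\pm\beta)$, makes the eigenvalue pairs $\{\pm i\alpha(H')\}$ and $\{\pm i\beta(H')\}$ of $\ad_{iH'}$ disjoint. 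Since $L$ is invariant, the standard fact that an invariant subspace equals the direct sum of its intersections with the isotypic components gives precisely $L=\bigoplus_\alpha L\cap(\mathfrak{u}_\alpha\oplus\mathfrak{u}_\alpha^\ast)$. This weight-separation of the blocks is the conceptual core, and the step I expect to require the most care.

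Granting the decomposition, the equivalence for the maximal isotropic condition follows from two observations. First, the pairing respects the blocks: for $\alpha\neq\beta$ one has $\langle\mathfrak{u}_\alpha\oplus\mathfrak{u}_\alpha^\ast,\,\mathfrak{u}_\beta\oplus\mathfrak{u}_\beta^\ast\rangle=0$, since the only potentially nonzero contribution $\tfrac12\eta(X)$ with $X\in\mathfrak{u}_\alpha$ and $\eta\in\mathfrak{u}_\beta^\ast$ vanishes because $\mathfrak{u}_\beta^\ast$ annihilates $\mathfrak{u}_\alpha$. Hence $L=\bigoplus_\alpha L_\alpha$ is isotropic if and only if each $L_\alpha$ is isotropic inside its block. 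Second, a dimension count promotes this to maximality: each block $\mathfrak{u}_\alpha\oplus\mathfrak{u}_\alpha^\ast$ has dimension $4$, so a maximal isotropic $L_\alpha$ has dimension $2$, whereas $\mathfrak{m}\oplus\mathfrak{m}^\ast$ has dimension $4l$ and a maximal isotropic $L$ has dimension $2l$; as $\dim L=\sum_\alpha\dim L_\alpha$, the equality $\dim L=2l$ holds if and only if $\dim L_\alpha=2$ for every $\alpha$. Combining isotropy with this count shows that $L$ is a Dirac structure on $\mathfrak{m}$ exactly when each $L_\alpha$ is a Dirac structure on $\mathfrak{u}_\alpha$, which is the assertion.
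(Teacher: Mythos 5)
The paper itself gives no proof of this proposition: it is recalled verbatim from \cite{VS}, so there is no in-house argument to compare yours against. Your proof is correct and supplies what the citation leaves implicit. In particular, you are right that the displayed decomposition fails for a generic subspace (e.g.\ $\mathrm{span}\{A_\alpha+A_\beta\}$ meets no block) and that invariance under the isotropy representation is the hypothesis that makes it work; the paper only ever applies the proposition to invariant subspaces, so this is a genuine clarification rather than a change of statement. Your three steps are all sound: the computation $\ad_{iH'}A_\alpha=\alpha(H')S_\alpha$, $\ad_{iH'}S_\alpha=-\alpha(H')A_\alpha$ identifies $\mathfrak{u}_\alpha\oplus\mathfrak{u}_\alpha^\ast$ as the isotypic block of weight $\pm\alpha$, and choosing $H'$ off the hyperplanes $\ker(\alpha\pm\beta)$ separates the blocks so that the invariant subspace splits along them; the blocks are mutually orthogonal for the pairing because $\mathfrak{u}_\beta^\ast$ annihilates $\mathfrak{u}_\alpha$; and the count $\dim L=\sum_\alpha\dim L_\alpha\leq 2l$ with equality iff each summand is $2$-dimensional converts blockwise maximal isotropy into global maximal isotropy. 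Your reading of ``Dirac structure on $\mathfrak{u}_\alpha$'' as ``maximal isotropic subspace'' is also the intended one --- $\mathfrak{u}_\alpha$ carries no bracket, and the paper treats involutivity separately precisely because it couples distinct roots through the triples $(\alpha,\beta,\alpha+\beta)$ and cannot decouple as in this statement.
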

  
Note that Proposition \ref{rootdecomposition} can be extended to complex Dirac structures. So, for our purposes it suffices to describe the invariant complex Dirac structures $L_{\alpha}\subseteq \mathfrak{u}_\alpha\oplus \mathfrak{u}^*_\alpha$ for each positive root $\alpha$. For that, first we will find all maximal isotropic subspaces of $(\mathfrak{u}_\alpha \oplus \mathfrak{u}^\ast _\alpha)\otimes \mathbb{C}$, then we proceed to analyze the invariance, involutivity and real index of such subspaces separately.  

\begin{remark}
It was proven in \cite{AR} that every complex Dirac structure has an associated canonical generalized complex structure (see \cite{AR}, Proposition 4.2). Then it is natural to expect that this result combined with \cite{VS} would provide a classification of the invariant complex Dirac structures on a flag manifold. However, the generalized complex structure of (\cite{AR}, Proposition 4.2) is not necessarily defined on a flag manifold as well. Because of this we focus our study on invariant complex Dirac structures rather than invariant generalized complex structures.
\end{remark}


\subsection{Maximal isotropic subspaces of $(\mathfrak{u}_\alpha \oplus \mathfrak{u}^\ast _\alpha)\otimes \mathbb{C}$}
Here we describe all maximal isotropic subspaces of $(\mathfrak{u}_\alpha \oplus \mathfrak{u}^\ast _\alpha)\otimes \mathbb{C}$.

\begin{proposition}\label{maximalisotropicsubspaces}
Let $L = \sum_\alpha L_\alpha$ be a maximal isotropic subspace of $(\mathfrak{m}\oplus \mathfrak{m}^\ast)\otimes \mathbb{C}$. Then, for each root $\alpha\in \Pi$, $L_\alpha\subseteq \mathfrak{u}_{\alpha}\oplus \mathfrak{u}^*_{\alpha}$ must be one of the following:
\begin{itemize}
\item[(a)] $L_\alpha = \textnormal{span}_\mathbb{C} \{ A_\alpha,S_\alpha\} = (\mathfrak{u}_\alpha)_\mathbb{C}$;
\item[(b)] $L_\alpha = \textnormal{span}_\mathbb{C} \{ -S_\alpha ^\ast ,A_\alpha ^\ast \} = (\mathfrak{u}_\alpha ^\ast)_\mathbb{C}$;
\item[(c)] $L_\alpha = \textnormal{span}_\mathbb{C} \{ A_\alpha,A_\alpha ^\ast \}$;
\item[(d)] $L_\alpha = \textnormal{span}_\mathbb{C} \{ S_\alpha,-S_\alpha ^\ast \}$;
\item[(e)] $L_\alpha = \textnormal{span}_\mathbb{C} \{ a_\alpha A_\alpha+b_\alpha S_\alpha, a_\alpha A_\alpha ^\ast + b_\alpha S_\alpha ^\ast \}$ with $a_\alpha,b_\alpha \neq 0$;
\item[(f)] $L_\alpha = \textnormal{span}_\mathbb{C} \{ a_\alpha A_\alpha+b_\alpha A_\alpha ^\ast,a_\alpha	S_\alpha + b_\alpha S_\alpha ^\ast \}$ with $a_\alpha,b_\alpha \neq 0$.
\end{itemize}
\end{proposition}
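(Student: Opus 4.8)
The plan is to use Proposition~\ref{rootdecomposition}, in its complex version, to reduce the statement to a purely linear-algebraic classification: to describe all maximal isotropic subspaces of the single $4$-dimensional complex vector space $V_\alpha \ce (\mathfrak{u}_\alpha\oplus\mathfrak{u}_\alpha^\ast)\otimes\mathbb{C}$ equipped with the complexification of the natural pairing $\langle\cdot,\cdot\rangle$. Since this pairing is nondegenerate of rank $4$, every maximal isotropic subspace is $2$-dimensional, so the entire task is to enumerate the isotropic $2$-planes $L_\alpha\subseteq V_\alpha$ and to check that each coincides with one of (a)--(f).

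First I would write down the pairing explicitly in the basis $\{A_\alpha, S_\alpha, A_\alpha^\ast, S_\alpha^\ast\}$. The two summands $(\mathfrak{u}_\alpha)_\mathbb{C}$ and $(\mathfrak{u}_\alpha^\ast)_\mathbb{C}$ are each isotropic and are put in perfect duality by $\langle\cdot,\cdot\rangle$, so the pairing is encoded by the four numbers $\langle A_\alpha, A_\alpha^\ast\rangle$, $\langle A_\alpha, S_\alpha^\ast\rangle$, $\langle S_\alpha, A_\alpha^\ast\rangle$, $\langle S_\alpha, S_\alpha^\ast\rangle$. Recording these correctly is essential, because it is exactly this table that will tie together the tangent and cotangent coefficients in cases (e) and (f).

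Next I would organize the enumeration by $E\ce\pi(L_\alpha)$, the image of $L_\alpha$ under the projection onto $(\mathfrak{u}_\alpha)_\mathbb{C}$. Because $(\mathfrak{u}_\alpha^\ast)_\mathbb{C}$ is maximal isotropic, the isotropy of $L_\alpha$ forces $L_\alpha\cap(\mathfrak{u}_\alpha^\ast)_\mathbb{C}=\Ann(E)$, the annihilator taken with respect to the pairing between the two factors, and $\dim E+\dim\Ann(E)=2$. This leaves the trichotomy $\dim E\in\{0,1,2\}$. When $\dim E=0$ one gets $L_\alpha=(\mathfrak{u}_\alpha^\ast)_\mathbb{C}$, which is (b). When $\dim E=1$ the space $\wedge^2 E^\ast$ vanishes, so any isotropic lift of a generator of $E$ lies in $(\mathfrak{u}_\alpha)_\mathbb{C}$; writing $E=\mathrm{span}_\mathbb{C}\{a_\alpha A_\alpha+b_\alpha S_\alpha\}$ and computing $\Ann(E)$ from the pairing table yields $L_\alpha=\mathrm{span}_\mathbb{C}\{a_\alpha A_\alpha+b_\alpha S_\alpha,\ a_\alpha A_\alpha^\ast+b_\alpha S_\alpha^\ast\}$, and the subcases $(a_\alpha,b_\alpha)=(1,0)$, $(0,1)$, and $a_\alpha,b_\alpha\neq0$ are precisely (c), (d), and (e). When $\dim E=2$ we have $L_\alpha\cap(\mathfrak{u}_\alpha^\ast)_\mathbb{C}=0$, so $L_\alpha$ is the graph of a map $\varepsilon\colon(\mathfrak{u}_\alpha)_\mathbb{C}\to(\mathfrak{u}_\alpha^\ast)_\mathbb{C}$, which is isotropic if and only if $\varepsilon$ is skew, i.e. a multiple of the (unique up to scale) $2$-form on the $2$-dimensional space; here $\varepsilon=0$ gives (a) and $\varepsilon\neq0$ gives the graph of type (f). This is precisely the classification of maximal isotropics by pairs $(E,\varepsilon)$ specialized to dimension two.

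The main obstacle is not conceptual but a matter of making the conventions line up: the real content is that the annihilator and graph computations produce the \emph{same} coefficients $a_\alpha,b_\alpha$ (respectively the same proportionality constant) in the tangent and cotangent slots, and this is dictated entirely by the off-diagonal structure of the pairing table between $\{A_\alpha,S_\alpha\}$ and $\{A_\alpha^\ast,S_\alpha^\ast\}$. The remaining care is to confirm exhaustiveness, namely that the trichotomy on $\dim E$ leaves no isotropic $2$-plane unaccounted for and that (c)--(e) and (a),(f) are merely the coordinate subcases of $\dim E=1$ and $\dim E=2$ respectively.
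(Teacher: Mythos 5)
Your proposal is correct, and it takes a genuinely different route from the paper. The paper's proof is a brute-force enumeration: it writes two generators $X,Y$ of $L_\alpha$ with arbitrary coefficients in the basis $\{A_\alpha,S_\alpha,-S_\alpha^\ast,A_\alpha^\ast\}$, expands $\langle X,X\rangle$, $\langle X,Y\rangle$, $\langle Y,Y\rangle$, and then runs through $14$ subcases obtained by setting various coefficients to zero, carrying out only the first subcase in detail and declaring the rest ``similar.'' You instead invoke the Courant normal form $L_\alpha=L(E,\varepsilon)$ with $E=\pi(L_\alpha)$ and $\varepsilon\in\wedge^2E^\ast$, and stratify by $\dim E\in\{0,1,2\}$: this is exhaustive by construction, explains \emph{why} the list has exactly the shape (a)--(f) (case (b) is $\dim E=0$; (c)--(e) are $E\oplus\Ann(E)$ for the three coordinate positions of a line $E$; (a) and (f) are graphs of the zero and nonzero $2$-forms on the plane), and replaces the unverified ``other cases are similar'' with a structural argument. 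Amusingly, the $L(E,\varepsilon)$ machinery you use is exactly what the paper itself deploys later in Section~\ref{sectionBtransformation}. Your one point of imprecision is the phrase that in the $\dim E=1$ case ``any isotropic lift of a generator of $E$ lies in $(\mathfrak{u}_\alpha)_\mathbb{C}$'' --- the lift is only determined modulo $\Ann(E)$, so the correct statement is that $\varepsilon=0$ forces $L_\alpha=E\oplus\Ann(E)$, which contains $E$ itself; this does not affect the conclusion. You are also right to flag the pairing table as the crux: with the paper's convention $\langle A_\alpha,S_\alpha^\ast\rangle=-\langle S_\alpha,A_\alpha^\ast\rangle=1$ and the mixed pairings $\langle A_\alpha,A_\alpha^\ast\rangle=\langle S_\alpha,S_\alpha^\ast\rangle=0$, the annihilator of $\mathrm{span}\{a_\alpha A_\alpha+b_\alpha S_\alpha\}$ is indeed $\mathrm{span}\{a_\alpha A_\alpha^\ast+b_\alpha S_\alpha^\ast\}$ and the graph of a skew map has equal proportionality constants on $A_\alpha$ and $S_\alpha$, so the matching of coefficients in (e) and (f) comes out as claimed.
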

\begin{proof}
Using that $\langle A_\alpha, S_\alpha ^\ast\rangle = - \langle S_\alpha, A^\ast _\alpha\rangle = 1$ and $\langle A_\alpha, A_\alpha ^\ast\rangle = \langle A_\alpha, S_\alpha \rangle = \langle A_\alpha, A_\alpha ^\ast\rangle = \langle S_\alpha, S_\alpha \rangle = \langle S_\alpha, S_\alpha ^\ast\rangle = \langle A_\alpha ^\ast, A_\alpha ^\ast\rangle = \langle A_\alpha ^\ast, S_\alpha ^\ast\rangle = \langle S_\alpha ^\ast, S_\alpha ^\ast\rangle = 0$ we can check directly that the subspaces $(a)$ -- $(f)$ are in fact maximal isotropic subspaces on $(\mathfrak{m}\oplus \mathfrak{m}^\ast)\otimes \mathbb{C}$.

Now let us prove that these are the only possibilities for a maximal isotropic subspace on $(\mathfrak{u}_\alpha \oplus \mathfrak{u}_\alpha ^\ast) \otimes \mathbb{C}$. In fact, let $L_\alpha$ be a maximal isotropic subspace on $(\mathfrak{u}_\alpha \oplus \mathfrak{u}_\alpha ^\ast) \otimes \mathbb{C}$. Since $(\mathfrak{u}_\alpha \oplus \mathfrak{u}_\alpha ^\ast) \otimes \mathbb{C}$ is 4-dimensional, it follows that $L_\alpha$ is 2-dimensional. Indeed, $L_\alpha = \textnormal{span}_\mathbb{C} \{ X,Y\}$ where $X=a_1A_\alpha +b_1S_\alpha+c_1 (-s_\alpha ^\ast)+d_1 A_\alpha ^\ast$ and $Y=a_2A_\alpha +b_2S_\alpha+c_2 (-s_\alpha ^\ast)+d_2 A_\alpha ^\ast$, with $a_i,b_i,c_i,d_i\in \mathbb{C}$. Therefore, we have $\langle X,X \rangle = 2(a_1c_1 + b_1d_1)$, $\langle X,Y \rangle = a_1c_2+b_1d_2+c_1a_2+d_1b_2$ and $\langle Y,Y\rangle = 2(a_2c_2+b_2d_2)$. Now, using these expressions and the fact that $L_\alpha$ is isotropic, we can prove that $L_\alpha$ must be one of those subspaces $(a)$ -- $(f)$. The idea is to make some of the scalars $a_1,b_1,c_1,d_1$ equal to zero and analyze each case until the possibilities are exhausted. Thus we have 14 possibilities to analyze. We will do the first case and the other cases are similar.

Consider $b_1=c_1=d_1 = 0$, that is, $X= a_1 \alpha$ with $a_1 \neq 0$. Then we have that $\langle X,Y \rangle= 0$ if and only if $a_1c_2 = 0$, therefore $c_2 = 0$. Then we can write $Y= a_2 A_\alpha + b_2 S_\alpha + d_2 A^\ast _\alpha$. Moreover, since $\langle Y,Y \rangle= 0$ we obtain $b_2d_2 = 0$. So, we get two subcases:
\begin{itemize}
\item[(i)] If $c_2 = b_2 = 0$, we have $Y= a_2 A_\alpha + d_2 A^\ast _\alpha$. Thus,
\begin{eqnarray*}
L_\alpha & = & \textnormal{span}_\mathbb{C}\{a_1 A_\alpha , a_2A_\alpha +d_2A^\ast _\alpha \}\\
& = & \textnormal{span}_\mathbb{C}\{ A_\alpha, A^\ast _\alpha\}.
\end{eqnarray*}

\item[(ii)] If $c_2 = d_2 = 0$, then $Y= a_2 A_\alpha + b_2S_\alpha$. Hence,
\begin{eqnarray*}
L_\alpha & = & \textnormal{span}_\mathbb{C}\{a_1 A_\alpha , a_2A_\alpha +b_2S_\alpha \}\\
& = & \textnormal{span}_\mathbb{C}\{ A_\alpha, S_\alpha\}.
\end{eqnarray*}
\end{itemize}
Therefore, we conclude that subcases $(i)$ and $(ii)$ are exactly the cases $(c)$ and $(a)$, respectively.
\end{proof}

\subsection{Invariance}
  
Remember that we are interested in the subspaces which are invariant with respect to the isotropy representation. In our case, the isotropy representation is equivalent to the adjoint representation which can be identified with the torus action as we saw in \cite{VS}. So these subspaces $L_\alpha$ listed in Proposition \ref{maximalisotropicsubspaces} are invariant if
\[
\left(\begin{array}{cccc}
\cos \theta & -\sin \theta & 0 & 0 \\
\sin \theta & \cos \theta & 0 & 0\\
0 & 0 & \cos \theta & -\sin \theta \\
0 & 0 & \sin \theta & \cos \theta
\end{array}\right) \cdot L_\alpha \subset L_\alpha.
\]
It is simple to see that the subspaces in $(a)$ and $(b)$ in Proposition \ref{maximalisotropicsubspaces} are invariant. Having into account that we are considering the ordered basis $\mathcal{B} = \{A_\alpha,S_\alpha,-S_\alpha ^\ast,A_\alpha ^\ast \}$, let us check the other cases:
\begin{itemize}
\item[(c)] We have that $A_\alpha \in L_\alpha$, then
\[
\left(\begin{array}{cccc}
\cos \theta & -\sin \theta & 0 & 0 \\
\sin \theta & \cos \theta & 0 & 0\\
0 & 0 & \cos \theta & -\sin \theta \\
0 & 0 & \sin \theta & \cos \theta
\end{array}\right) \cdot \left( \begin{array}{c} 
1 \\ 0 \\ 0 \\ 0
\end{array}\right) = \left( \begin{array}{c} 
\cos \theta \\ \sin \theta \\ 0 \\ 0
\end{array}\right).
\]
But observe that $\cos \theta A_\alpha + \sin \theta S_\alpha \notin L_\alpha$ when $\sin \theta \not= 0$. Therefore, we conclude that $L_\alpha$ is not invariant.

\item[(d)] The reasoning is analogous to case $(c)$ using the fact that $S_\alpha \in L_\alpha$, concluding that $L_\alpha$ is not invariant.

\item[(e)] Take $a_1 A_\alpha +b_1 S_\alpha \in L_\alpha$, then 
\[
\left(\begin{array}{cccc}
\cos \theta & -\sin \theta & 0 & 0 \\
\sin \theta & \cos \theta & 0 & 0\\
0 & 0 & \cos \theta & -\sin \theta \\
0 & 0 & \sin \theta & \cos \theta
\end{array}\right) \cdot \left( \begin{array}{c} 
a_1 \\ b_1 \\ 0 \\ 0
\end{array}\right) = \left( \begin{array}{c} 
a_1 \cos \theta - b_1 \sin \theta \\ a_1 \sin \theta + b_1 \cos \theta \\ 0 \\ 0
\end{array}\right).
\]
But $(a_1 \cos \theta - b_1 \sin \theta)A_\alpha + (a_1 \sin \theta + b_1 \cos \theta)S_\alpha \in L_\alpha$ if and only if there exists a scalar $k_1$ such that $(a_1 \cos \theta - b_1 \sin \theta)A_\alpha + (a_1 \sin \theta + b_1 \cos \theta)S_\alpha = k_1 (a_1 A_\alpha +b_1 S_\alpha)$, in other words, the scalar $k$ satisfies
\begin{eqnarray}\label{sistcase5}
\left\lbrace \begin{array}{l}
a_1 \cos \theta - b_1 \sin \theta = k_1a_1\\
a_1 \sin \theta + b_1 \cos \theta = k_1b_1.
\end{array}\right. 
\end{eqnarray}
Observe that, for $\theta = \pi/2$ we have 
\[
\left\lbrace \begin{array}{l}
 - b_1 = k_1a_1\\
a_1 = k_1b_1
\end{array}\right.
\]
and, then we obtain $k_1 ^2 = -1$ which implies that $k_1 = \pm i$. Using the fact that $a_1A_\alpha ^\ast +b_1S_\alpha ^\ast \in L_\alpha$ we obtain an expression equivalent to Equation (\ref{sistcase5}). Therefore, we have $L_\alpha$ invariant when 
\[
L_\alpha = \textnormal{span}_\mathbb{C}\{ A_\alpha \pm iS_\alpha ,A_\alpha ^\ast \pm iS_\alpha ^\ast\}.
\]

\item[(f)] Using $a_1 A_\alpha + b_1 A^\ast _\alpha \in L_\alpha$, we obtain
\[
\left(\begin{array}{cccc}
\cos \theta & -\sin \theta & 0 & 0 \\
\sin \theta & \cos \theta & 0 & 0\\
0 & 0 & \cos \theta & -\sin \theta \\
0 & 0 & \sin \theta & \cos \theta
\end{array}\right) \cdot \left( \begin{array}{c} 
a_1 \\ 0 \\ 0 \\ b_1
\end{array}\right) = \left( \begin{array}{c} 
a_1 \cos \theta\\ a_1 \sin \theta \\ -b_1 \sin \theta \\ a_1 \cos \theta
\end{array}\right).
\]
and then $a_1 \cos \theta A_\alpha + a_1 \sin \theta S_\alpha - b_1 \sin \theta (- S_\alpha ^\ast) + a_1 \cos \theta A_\alpha ^\ast \in L_\alpha$ if and only if there exist scalars $k_1,k_2$ such that $a_1 \cos \theta A_\alpha + a_1 \sin \theta S_\alpha - b_1 \sin \theta (- S_\alpha ^\ast) + a_1 \cos \theta A_\alpha ^\ast = k_1 (a_1 A_\alpha + b_1 A^\ast _\alpha) + k_2 (a_1 S_\alpha +b_1 S_\alpha ^\ast)$, or equivalently
\begin{eqnarray}\label{sistcase6}
\left\lbrace \begin{array}{l}
a_1 \cos \theta = k_1a_1\\
a_1 \sin \theta  = k_2a_1\\
b_1 \sin \theta = k_2b_1\\
b_1 \cos \theta = k_1b_1
\end{array}\right.
\end{eqnarray}
Since $a_1,b_1 \neq 0$ we have $k_1 = \cos \theta$ and $k_2 = \sin \theta$. Since $a_1 S_\alpha + b_1S^\ast _\alpha \in L_\alpha$ we obtain an expression equivalent to Equation (\ref{sistcase6}). Therefore, $L_\alpha$ is invariant.
\end{itemize}

Thus, we have the following result.

\begin{proposition}\label{listaDirac}
The invariant maximal isotropic subspaces of $(\mathfrak{u}_\alpha \oplus \mathfrak{u}_\alpha ^\ast)\otimes \mathbb{C}$ are:
\begin{enumerate}
\item[(A)] $L_\alpha = \textnormal{span}_\mathbb{C}\{ A_\alpha, S_\alpha\}$;

\item[(B)] $L_\alpha = \textnormal{span}_\mathbb{C}\{ -S^\ast _\alpha, A^\ast _\alpha\}$;

\item[(C)] $L_\alpha = \textnormal{span}_\mathbb{C}\{ A_\alpha+i\varepsilon_\alpha S_\alpha, A^\ast _\alpha +i\varepsilon_\alpha S_\alpha ^\ast\}$ with $\varepsilon_\alpha = \pm1$;

\item[(D)] $L_\alpha = \textnormal{span}_\mathbb{C}\{ a_1A_\alpha+ b_1A^\ast _\alpha, a_1S_\alpha + b_1S^\ast _\alpha\}$ with $a_1,b_1 \neq 0$.
\end{enumerate}
\end{proposition}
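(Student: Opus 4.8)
The plan is to combine the enumeration of maximal isotropic subspaces from Proposition~\ref{maximalisotropicsubspaces} with the invariance constraint, but to organize the invariance step around the infinitesimal torus action instead of running through the fourteen subcases by hand. First I would note that $T$ acts on each block $\mathfrak{u}_\alpha\oplus\mathfrak{u}_\alpha^\ast$ through the connected circle $\{R_\theta\}_\theta$ written above, so that a subspace is invariant precisely when it is stable under the infinitesimal generator $N=\frac{d}{d\theta}\big|_{0}R_\theta$, which in the ordered basis $\{A_\alpha,S_\alpha,-S_\alpha^\ast,A_\alpha^\ast\}$ sends $A_\alpha\mapsto S_\alpha$, $S_\alpha\mapsto -A_\alpha$, $A_\alpha^\ast\mapsto S_\alpha^\ast$, $S_\alpha^\ast\mapsto -A_\alpha^\ast$. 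This operator is semisimple with eigenvalues $\pm i$, each of multiplicity two, and its eigenspaces are $E_{i}=\textnormal{span}_\mathbb{C}\{A_\alpha-iS_\alpha,\,A_\alpha^\ast-iS_\alpha^\ast\}$ and $E_{-i}=\textnormal{span}_\mathbb{C}\{A_\alpha+iS_\alpha,\,A_\alpha^\ast+iS_\alpha^\ast\}$. Consequently every invariant subspace $L_\alpha$ splits as $L_\alpha=(L_\alpha\cap E_i)\oplus(L_\alpha\cap E_{-i})$, so for the two-dimensional $L_\alpha$ only the dimension patterns $(2,0)$, $(0,2)$ and $(1,1)$ can occur.

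Next I would impose maximal isotropy. Using $\langle A_\alpha,S_\alpha^\ast\rangle=-\langle S_\alpha,A_\alpha^\ast\rangle=1$ together with the vanishing of the remaining basic pairings, a direct check shows that $E_i$ and $E_{-i}$ are themselves isotropic; hence the patterns $(2,0)$ and $(0,2)$ yield exactly $L_\alpha=E_{\pm i}$, which are the structures in item~(3) with $\varepsilon_\alpha=\mp1$. For the $(1,1)$ pattern I would write $L_\alpha=\textnormal{span}_\mathbb{C}\{u,v\}$ with $0\neq u\in E_i$ and $0\neq v\in E_{-i}$; since each eigenline is already isotropic, the single remaining condition is $\langle u,v\rangle=0$. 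Evaluating the pairing across the two eigenspaces gives $\langle u,v\rangle=2i(st'-ts')$ for $u=s(A_\alpha-iS_\alpha)+t(A_\alpha^\ast-iS_\alpha^\ast)$ and $v=s'(A_\alpha+iS_\alpha)+t'(A_\alpha^\ast+iS_\alpha^\ast)$, so isotropy is equivalent to the proportionality $(s',t')\parallel(s,t)$, and after rescaling $v$ I may take $s'=s$, $t'=t$.

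Taking the sum and difference of $u$ and $v$ then collapses $L_\alpha$ to $\textnormal{span}_\mathbb{C}\{sA_\alpha+tA_\alpha^\ast,\,sS_\alpha+tS_\alpha^\ast\}$. Reading off the sub-cases finishes the proof: both $s,t\neq0$ gives item~(4) (with $a_1=s$, $b_1=t$), $t=0$ gives $\textnormal{span}_\mathbb{C}\{A_\alpha,S_\alpha\}$, item~(1), and $s=0$ gives $\textnormal{span}_\mathbb{C}\{A_\alpha^\ast,S_\alpha^\ast\}=\textnormal{span}_\mathbb{C}\{-S_\alpha^\ast,A_\alpha^\ast\}$, item~(2). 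Conversely each listed subspace is visibly $N$-invariant and isotropic, so the four families are exhaustive.

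I expect the only real obstacle to be bookkeeping rather than conceptual: one must be careful that the degenerate specializations $s=0$ and $t=0$ of the $(1,1)$ family are recognized as items~(1) and~(2) and not discarded, and that the continuous families (e)--(f) of Proposition~\ref{maximalisotropicsubspaces} contribute nothing new beyond what the eigenspace analysis already records. As a sanity check --- and as the more elementary alternative actually carried out in the computations preceding the statement --- one can instead feed the invariance condition directly into cases (a)--(f): (a) and (b) are invariant, (c) and (d) fail, (e) survives only when the coefficient ratio forces the eigenvalue $\pm i$, and (f) is invariant for all nonzero $a_1,b_1$. The infinitesimal route merely repackages this case analysis into one eigenspace decomposition.
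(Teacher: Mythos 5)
Your argument is correct, and it takes a genuinely different route from the paper's. The paper first enumerates \emph{all} maximal isotropic subspaces of $(\mathfrak{u}_\alpha\oplus\mathfrak{u}_\alpha^\ast)\otimes\mathbb{C}$ (Proposition \ref{maximalisotropicsubspaces}, via a multi-subcase elimination) and then tests each of the six resulting families against the rotation matrix, solving a small linear system family by family; you reverse the order, imposing invariance first through the eigenspace decomposition of the infinitesimal generator $N$ and only then cutting down by isotropy. The two computations your argument hinges on check out: $E_{\pm i}$ are isotropic since $\langle A_\alpha\mp iS_\alpha,\,A_\alpha^\ast\mp iS_\alpha^\ast\rangle=\mp i\langle A_\alpha,S_\alpha^\ast\rangle\mp i\langle S_\alpha,A_\alpha^\ast\rangle=\mp i\pm i=0$, and the cross pairing is indeed $\langle u,v\rangle=2i(st'-ts')$, so the $(1,1)$ pattern collapses, after rescaling $v$, to $\textnormal{span}_\mathbb{C}\{sA_\alpha+tA_\alpha^\ast,\,sS_\alpha+tS_\alpha^\ast\}$, whose specializations $t=0$, $s=0$ and $s,t\neq0$ recover items (1), (2) and (4). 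What your approach buys: it replaces the case-by-case elimination with one structural fact (an invariant subspace of a semisimple operator is the sum of its intersections with the eigenspaces), it makes the coefficients $\pm i$ in item (3) conceptually inevitable (they are the weights of the circle action), and it does not actually need Proposition \ref{maximalisotropicsubspaces} as input beyond the fact that a maximal isotropic in a four-dimensional split-signature space is two-dimensional. What the paper's route buys is the standalone catalogue of all, not necessarily invariant, maximal isotropics, which it records as a separate proposition. The one step worth stating explicitly in a final write-up is the equivalence between invariance under the connected group $\{R_\theta\}$ and invariance under $N$, which is what licenses passing to the infinitesimal action.
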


\subsection{Involutivity}\label{involutivity}

Now we study the Courant involutivity of a maximal isotropic subspace of $(\mathfrak{m}\oplus \mathfrak{m}^\ast)\otimes \mathbb{C}$ . In \cite{VS} was proved that the involutivity is given by analyzing the maximal isotropic subspaces associated to triples of positive roots of the type $(\alpha,\beta,\alpha+\beta)$, because for triples $(\alpha,\beta,\gamma)$, where $\gamma \neq \alpha +\beta$, the Nijenhuis operator is immediately zero. Since a maximal isotropic subspace of $(\mathfrak{m}\oplus \mathfrak{m}^\ast)\otimes \mathbb{C}$ can be written as $L=\sum_\alpha L_\alpha$, we just need to analyze the possible combinations for triples $(L_\alpha,L_\beta,L_{\alpha+\beta})$, where $L_\delta$ can be one of the four types listed on Proposition \ref{listaDirac}, for $\delta \in \{\alpha,\beta,\alpha+\beta\}$. 

Let $\alpha ,\beta $ be roots such that $\alpha +\beta $ is a root as well. After some computations, we have 
\begin{eqnarray*}
\Nij(A_{\alpha },S_{\beta },A_{\alpha +\beta }^{\ast }) & = & -\Nij(A_{\alpha
},A_{\beta },S_{\alpha +\beta }^{\ast })=\Nij(S_{\alpha },S_{\beta
},S_{\alpha +\beta }^{\ast })=\Nij(S_{\alpha },A_{\beta },A_{\alpha +\beta }^{\ast }) \\
& = & -\Nij (A_\alpha, S^\ast _\beta, A_{\alpha+\beta})=\Nij (A_\alpha,
A^\ast _\beta, S_{\alpha+\beta}) = -\Nij (S_\alpha, S^\ast _\beta, S_{\alpha+\beta})\\
& = & = - \Nij (S_\alpha, A^\ast _\beta, A_{\alpha+\beta}) = -\Nij(A_{\alpha }^{\ast },S_{\beta },A_{\alpha +\beta }) = \Nij(A_{\alpha
}^{\ast },A_{\beta },S_{\alpha +\beta })\\
& = & - \Nij(S_{\alpha }^{\ast },S_{\beta},S_{\alpha +\beta }) = -\Nij(S_{\alpha }^{\ast },A_{\beta },A_{\alpha +\beta }) \neq 0
\end{eqnarray*}
and the other cases are all zero. For more details see \cite{VS}.

From the last we can conclude the following.

\begin{proposition}\label{3tangent}
Let $(L_\alpha,L_\beta,L_{\alpha+\beta})$ be a triple where each of its components is described by case (A) of Proposition \ref{listaDirac}, then the Nijenhuis operator restricted to $L_\alpha\cup L_\beta \cup L_{\alpha+\beta}$ vanishes.
\end{proposition}

\begin{proposition}\label{2dualspaces}
Let $(L_\alpha,L_\beta,L_{\alpha+\beta})$ such that at least two of them are described by case (B) of Proposition \ref{listaDirac}, then the Nijenhuis operator restricted to $L_\alpha\cup L_\beta \cup L_{\alpha+\beta}$ vanishes. 
\end{proposition}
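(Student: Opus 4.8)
The plan is to leverage the explicit table of nonvanishing Nijenhuis values stated just before the proposition. I observe that every one of the displayed nonzero terms $\Nij(\cdots)$ involves exactly one factor of the form $A_\delta^\ast$ or $S_\delta^\ast$ from the dual copy $\mathfrak{u}^\ast$, together with two factors coming from the ``primal'' copy $\mathfrak{u}$ (the undecorated $A$'s and $S$'s). In other words, each surviving triple has \emph{exactly one starred entry} and \emph{exactly two unstarred entries}. So the strategy is purely combinatorial: I would show that if at least two of the three subspaces $L_\alpha, L_\beta, L_{\alpha+\beta}$ are of type (2), then every generator triple we could feed into $\Nij$ has \emph{at least two} starred entries, hence lies outside the list of nonvanishing cases and must vanish.

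First I would recall that type (2) means $L_\delta = \textnormal{span}_\mathbb{C}\{-S_\delta^\ast, A_\delta^\ast\} = (\mathfrak{u}_\delta^\ast)_\mathbb{C}$, so \emph{both} of its basis vectors are starred (they live entirely in the dual copy $\mathfrak{u}^\ast$). By trilinearity of $\Nij$, to check that $\Nij$ vanishes on $L_\alpha \cup L_\beta \cup L_{\alpha+\beta}$ it suffices to evaluate it on all triples of basis vectors, one drawn from each $L_\delta$ (here I use that $\Nij$ already vanishes automatically unless the three roots form a triple $(\alpha,\beta,\alpha+\beta)$, as noted before Proposition \ref{3tangent}). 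Now suppose, without loss of generality, that $L_\alpha$ and $L_\beta$ are both of type (2). Then any basis vector chosen from $L_\alpha$ is starred and any basis vector chosen from $L_\beta$ is starred, so every admissible generator triple contributes at least two starred entries, regardless of which type $L_{\alpha+\beta}$ has and which of its basis vectors we select.

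The key step is then the simple inspection: scanning the list of nonvanishing $\Nij$ values above, each nonzero term carries precisely one starred slot. Consequently a triple with two or more starred slots never appears in that list, and therefore evaluates to zero. Since the choice of which pair among $\{\alpha,\beta,\alpha+\beta\}$ is of type (2) is symmetric (one checks the cases where the type-(2) pair is $\{\alpha,\beta\}$, $\{\alpha,\alpha+\beta\}$, or $\{\beta,\alpha+\beta\}$, and the remaining subspace is arbitrary among the four types), the argument closes in all cases by the same counting. By trilinearity, vanishing on all basis triples yields $\Nij|_{L_\alpha \cup L_\beta \cup L_{\alpha+\beta}} = 0$.

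I do not anticipate a genuine obstacle here; the content is a bookkeeping argument reading off the symmetry of the $\Nij$ table. The one point requiring a little care is making the ``at most one starred entry'' observation precise and confirming it is a genuine invariant of the full nonvanishing list (including the relations $\Nij(A_\alpha^\ast, S_\beta, A_{\alpha+\beta})$ etc., where the star has migrated to a different slot): in every row the star appears in exactly one of the three positions, never in two. Once that symmetry is verified, the proposition follows immediately, and the same mechanism will also streamline the remaining case analyses that mix type (2) with the other types.
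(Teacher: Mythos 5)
Your proof is correct and follows essentially the same route as the paper, which states Proposition \ref{2dualspaces} as a direct consequence of the displayed list of nonvanishing Nijenhuis values; your observation that every nonzero entry of that list carries exactly one starred slot (so any monomial with two or more factors from $\mathfrak{u}^\ast$ vanishes) is precisely the bookkeeping the paper leaves implicit, and it is a genuine invariant of the list since the Courant bracket of two one-forms is zero and the pairing of two one-forms is zero.
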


In what follows we will use the following notation:

\begin{notation}\label{nottriple}
We will refer to $L_\alpha$ by its respective case number according to Proposition \ref{listaDirac}. For example, let $(\alpha,\beta,\alpha+\beta)$  be a triple of roots and consider $(L_\alpha,L_\beta,L_{\alpha+\beta})$ the complex Dirac structures associated to these roots such that $L_\alpha = \textnormal{span}_\mathbb{C}\{ A_\alpha, S_\alpha\}$, $L_\beta = \textnormal{span}_\mathbb{C}\{ A_\beta+i\varepsilon_\beta S_\beta, A^\ast _\beta +i\varepsilon_\beta S_\beta ^\ast\}$ and $L_{\alpha+\beta} = \textnormal{span}_\mathbb{C}\{ a_1 A_{\alpha+\beta}+ b_1 A^\ast _{\alpha+\beta}, a_1 S_{\alpha+\beta} + b_1 S^\ast _{\alpha+\beta}\}$. Then we will refer to the triple $(L_\alpha,L_\beta,L_{\alpha+\beta})$ as $(A,C,D)$, because $L_\alpha$ is associated to case (A), $L_\beta$ to case (C) and $L_{\alpha+\beta}$ to case (D).
\end{notation}

We will list below the other cases where the Nijenhuis operator is zero. Fixing $L = L_\alpha \cup L_\beta \cup L_{\alpha+\beta}$, we have

\begin{table}[htb!]
\centering
\begin{tabular}{c|c|c|l}
$L_\alpha$ & $L_\beta$ & $L_{\alpha + \beta}$ & $\Nij|_L = 0$ \\ \hline
A & C & D & $\varepsilon_\beta = \varepsilon_{\alpha+\beta}$\\
A & D & D & $\frac{a_1}{b_1} = \frac{a_2}{b_2}$\\
B & C & D & $\varepsilon_\beta = \varepsilon_{\alpha+\beta}$\\
C & C & A & $\varepsilon_\alpha = -\varepsilon_\beta$\\
C & C & B & $\varepsilon_\alpha = -\varepsilon_\beta$\\
C & C & C & $\varepsilon_{\alpha+\beta}-\varepsilon_\alpha -\varepsilon_\beta +\varepsilon_\alpha \varepsilon_\beta \varepsilon_{\alpha+\beta} = 0$\\
C & C & D & $\varepsilon_\alpha = -\varepsilon_\beta$\\
C & D & C & $\varepsilon_\alpha = \varepsilon_{\alpha+\beta}$\\
D & D & A & $ \frac{a_1}{b_1} = -\frac{a_2}{b_2}$\\
D & D & D & $a_1a_2b_3-a_1b_2a_3-b_1a_2a_3=0$
\end{tabular}
\caption{Involutivity conditions} \label{tableinv}
\end{table}
\FloatBarrier

\subsection{The real index}
The next step is to determine the real index (see Definition \ref{realindex}) of each Lagrangian subspace of $(\mathfrak{u}_\alpha \oplus \mathfrak{u}_\alpha ^\ast)\otimes \mathbb{C}$. For the first two cases of Proposition \ref{listaDirac} it is simple to check that $L_\alpha = \overline{L_\alpha}$ and therefore $\dim (L_\alpha \cap \overline{L_\alpha}) = 2$. Moreover, note that the subspaces of type (C) in Proposition \ref{listaDirac} are the Lagrangian subspaces described in \cite{VS} associated to generalized complex structures of complex type, therefore $\dim (L_\alpha \cap \overline{L_\alpha}) = 0$. Then it remains case (D) of Proposition \ref{listaDirac} to be analyzed. We will see that in this case the real index of $L_{\alpha}$ can be either zero or two.

Indeed, consider $L_\alpha = \textnormal{span}_\mathbb{C}\{ a_1A_\alpha+b_1A_\alpha ^\ast, a_1S_\alpha + b_1S^\ast _\alpha\}$ with $a_1,b_1 \neq 0$ and suppose $X\in L_\alpha \cap \overline{L_\alpha}$. Then $X =  x_1 (a_1A_\alpha+b_1A_\alpha ^\ast) + x_2(a_1S_\alpha + b_1S^\ast _\alpha)$ and also $X = y_1 (\overline{a_1}A_\alpha+\overline{b_1}A_\alpha ^\ast) + y_2(\overline{a_1}S_\alpha + \overline{b_1}S^\ast _\alpha)$. We need to analyze some cases:
\begin{itemize}
\item[1.] If $x_1 = 0$ and $x_2 \neq 0$, we have that $y_1 = 0$ and $y_2\neq 0$. Thus $x_2(a_1S_\alpha + b_1S^\ast _\alpha) = y_2(\overline{a_1}S_\alpha + \overline{b_1}S^\ast _\alpha)$, that is, 
\[
\left\lbrace\begin{array}{l}
x_2a_1 = y_2\overline{a_1}\\
x_2b_1 = y_2\overline{b_1}.
\end{array}\right.
\]
From the system above we obtain $\overline{\left( \frac{b_1}{a_1}\right)} = \frac{b_1}{a_1}$, therefore $\frac{b_1}{a_1} \in \mathbb{R}$.
But, if $\frac{b_1}{a_1}\in \mathbb{R}$ we also have that $a_1A_\alpha + b_1A^\ast _\alpha \in L_\alpha \cap \overline{L_\alpha}$. In fact, if $\frac{b_1}{a_1}\in \mathbb{R}$ then $\overline{ \left( \frac{b_1}{a_1}\right) } = \frac{b_1}{a_1} $ which implies that there is $k\in \mathbb{C}$ such that $\left\lbrace\begin{array}{l}
b_1 = k\overline{b_1}\\
a_1 = k\overline{a_1}
\end{array}\right. $. Therefore we can write $a_1A_\alpha + b_1A^\ast _\alpha = k(\overline{a_1}A_\alpha + \overline{b_1}A^\ast _\alpha) \in \overline{L_\alpha}$. So, we can conclude that $\dim (L_\alpha \cap \overline{L_\alpha}) = 2$ when $\frac{b_1}{a_1} \in \mathbb{R}$ and $\dim (L_\alpha \cap \overline{L_\alpha}) = 0$ otherwise.\\

\item[2.] If $x_1 \neq 0$ and $x_2 = 0$, we have $y_1 \neq 0$ and $y_2 = 0$ and it is analogous to the previous item.\\

\item[3.] Now, let us suppose that $x_i,y_i \neq 0$ for $i=1,2$. So $x_1 (a_1A_\alpha+b_1A^\ast _\alpha) + x_2(a_1S_\alpha + b_1S^\ast _\alpha)= y_1 (\overline{a_1}A_\alpha+\overline{b_1}A^\ast _\alpha) + y_2(\overline{a_1}S_\alpha + \overline{b_1}S^\ast _\alpha)$ and then we have
\[
\left\lbrace \begin{array}{l}
x_1b_1 = y_1\overline{b_1}\\
x_1a_1 = y_1\overline{a_1}
\end{array}\right.
\textnormal{ and } \left\lbrace \begin{array}{l}
x_2b_1 = y_2\overline{b_1}\\
x_2a_1 = y_2\overline{a_1}.
\end{array}\right. 
\]
Therefore, we can conclude that $\frac{b_1}{a_1} = \overline{\left( \frac{b_1}{a_1}\right) }$ which implies that $\frac{b_1}{a_1} \in \mathbb{R}$. From the first case we conclude that $\dim (L_\alpha \cap \overline{L_\alpha}) = 2$.
\end{itemize}
  
Finally, observe that when $\dim (L_\alpha \cap \overline{L_\alpha}) = 0$ then $L_\alpha$ defines a generalized complex structure, in this case we have a generalized complex structure of symplectic type (up to $B$-transformation). Due to \cite{VS} we know that $L_\alpha$ reduces to
\[
L_\alpha = \textnormal{span}_\mathbb{C}\{ x_\alpha A_\alpha + (a_\alpha-i)A_\alpha ^\ast, x_\alpha S_\alpha +(a_\alpha-i) S_\alpha ^\ast \},
\]
with $a_\alpha,x_\alpha \in \mathbb{R}$ and $x_\alpha \neq 0$.

As a direct consequence of Proposition \ref{rootdecomposition} and the fact that, for each positive root $\alpha$, the subspace $L_\alpha$ has real index zero or two we have the following:

\begin{theorem}\label{teoDirac}
Let $L$ be an invariant complex Dirac structure on a flag manifold $\mathbb{F}$. Then the real index of $L$ is constant and equal to $2k$ for some $0\leq k \leq l$ where $l$ is the number of positive roots of the associated Lie algebra $\mathfrak{g}$.  
\end{theorem}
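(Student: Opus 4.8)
The plan is to exploit the root-space decomposition together with the case-by-case computation of real indices already carried out just before the statement. By Proposition \ref{rootdecomposition}, extended to complex Dirac structures, an invariant complex Dirac structure $L$ on $\mathbb{F}$ splits as $L = \bigoplus_\alpha L_\alpha$ with $L_\alpha = L \cap (\mathfrak{u}_\alpha \oplus \mathfrak{u}_\alpha^\ast)$, the sum ranging over the $l$ positive roots. First I would observe that complex conjugation respects this decomposition: since each $\mathfrak{u}_\alpha$ is a \emph{real} subspace of $\mathfrak{u}$, the summand $(\mathfrak{u}_\alpha \oplus \mathfrak{u}_\alpha^\ast)\otimes\mathbb{C}$ is stable under conjugation, so that $\overline{L} = \bigoplus_\alpha \overline{L_\alpha}$ with each $\overline{L_\alpha} \subseteq (\mathfrak{u}_\alpha \oplus \mathfrak{u}_\alpha^\ast)\otimes\mathbb{C}$.

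The next step is to note that, because the distinct summands $(\mathfrak{u}_\alpha \oplus \mathfrak{u}_\alpha^\ast)\otimes\mathbb{C}$ are in direct sum inside $(\mathfrak{m}\oplus\mathfrak{m}^\ast)\otimes\mathbb{C}$, the intersection distributes over the decomposition:
\[
L \cap \overline{L} = \bigoplus_\alpha \left( L_\alpha \cap \overline{L_\alpha}\right).
\]
Taking complex dimensions yields $\dim_\mathbb{C}(L\cap\overline{L}) = \sum_\alpha \dim_\mathbb{C}(L_\alpha \cap \overline{L_\alpha})$, so the real index of $L$ is simply the sum of the real indices of the pieces $L_\alpha$.

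Finally I would invoke the preceding analysis. By Proposition \ref{listaDirac} each $L_\alpha$ is one of the four invariant types, and we have just shown that its real index $\dim_\mathbb{C}(L_\alpha \cap \overline{L_\alpha})$ equals $2$ in cases $(1)$, $(2)$, and in case $(4)$ when $b_1/a_1\in\mathbb{R}$, while it equals $0$ in case $(3)$ and in case $(4)$ when $b_1/a_1\notin\mathbb{R}$. In every case the contribution is either $0$ or $2$. Writing $k$ for the number of positive roots $\alpha$ for which $L_\alpha$ has real index $2$, we obtain $\dim_\mathbb{C}(L\cap\overline{L}) = 2k$ with $0\leq k\leq l$, as claimed; the constancy of the real index over all of $\mathbb{F}$ is automatic from invariance, as recorded in the introduction.

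Since the real-index computations are all in place, the only genuine point requiring care is the distributivity of the intersection over the root decomposition, that is, establishing the equality $L\cap\overline{L} = \bigoplus_\alpha (L_\alpha \cap \overline{L_\alpha})$ rather than merely the inclusion $\supseteq$. This is where I expect the only real work to lie, and it follows from the directness of the ambient decomposition $(\mathfrak{m}\oplus\mathfrak{m}^\ast)\otimes\mathbb{C} = \bigoplus_\alpha (\mathfrak{u}_\alpha \oplus \mathfrak{u}_\alpha^\ast)\otimes\mathbb{C}$: any element of $L\cap\overline{L}$ admits a unique expansion along the root summands, and uniqueness forces its $\alpha$-component to lie simultaneously in $L_\alpha$ and in $\overline{L_\alpha}$.
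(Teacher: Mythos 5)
Your proposal is correct and follows essentially the same route as the paper, which deduces the theorem directly from Proposition \ref{rootdecomposition} together with the case-by-case computation showing each $L_\alpha$ has real index $0$ or $2$. The only difference is that you explicitly justify the distributivity $L\cap\overline{L}=\bigoplus_\alpha (L_\alpha\cap\overline{L_\alpha})$, a step the paper leaves implicit.
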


\begin{remark}\label{construirDirac}
Let $\mathbb{F}$ be a flag manifold with $\dim\mathbb{F} = 2l$. So, by Theorem \ref{teoDirac}, if we want an invariant complex Dirac structure of real index $k=2r$, where $0\leq r\leq l$, we need to join $r$ Lagrangian subspaces with real index two and $l-r$ Lagrangian subspaces with real index zero, respecting the fact that $\Nij |_L = 0$.
\end{remark}

\begin{remark}
When $r=0$ in Remark \ref{construirDirac} we recover the invariant generalized complex structures described in \cite{VS}.
\end{remark}


In light of Remark \ref{construirDirac}, we will describe the triples of invariant complex Dirac structures $(L_\alpha,L_\beta,L_{\alpha+\beta})$ separating them according to the real index. The possible invariant complex Dirac structures on the subspace $\mathfrak{u}_\alpha$ are described by Proposition \ref{listaDirac}, but the case (D) can be of real index zero or two depending on $a_1$ and $b_1$. So, to simplify the notation we will split these cases. From now on we have:
\begin{align}\label{listaDirac2}
(A)\  & L_\alpha = \textnormal{span}_\mathbb{C}\{ A_\alpha, S_\alpha\} = (\mathfrak{u}_\alpha)_\mathbb{C}; \nonumber \\
(B)\ &  L_\alpha = \textnormal{span}_\mathbb{C}\{ -S^\ast _\alpha, A^\ast _\alpha\}= (\mathfrak{u}_\alpha ^\ast)_\mathbb{C}; \nonumber\\
(C)\ & L_\alpha = \textnormal{span}_\mathbb{C}\{ A_\alpha+i\varepsilon_\alpha S_\alpha, A^\ast _\alpha +i\varepsilon_\alpha S_\alpha ^\ast\} \textnormal{ with } \varepsilon_\alpha = \pm 1 \nonumber\\ 
& \textnormal{(generalized complex structure of complex type)};\\
(D1)\ & L_\alpha = \textnormal{span}_\mathbb{C}\{ a_1A_\alpha+ b_1A^\ast _\alpha, a_1S_\alpha + b_1S^\ast _\alpha\} \textnormal{ with } a_1,b_1 \neq 0 \textnormal{ and } \frac{b_1}{a_1} \in \mathbb{R} \nonumber \\
(D2)\ & L_\alpha = \textnormal{span}_\mathbb{C}\{ x_\alpha A_\alpha+ (a_\alpha-i) A^\ast _\alpha, x_\alpha S_\alpha + (a_\alpha -i) S^\ast _\alpha\} \textnormal{ with } x_\alpha, a_\alpha \in \mathbb{R} \textnormal{ and } x_\alpha \neq 0.\nonumber \\ & \textnormal{(generalized complex structure of symplectic type)} \nonumber
\end{align}
Therefore, now we get that cases (A), (B) and (D1) have real index two and cases (C) and (D2) have real index zero.

From now on, we will use Notation \ref{nottriple} based on the cases appearing in (\ref{listaDirac2}) instead of the cases from Proposition \ref{listaDirac}. For example, let $(\alpha,\beta,\alpha+\beta)$  be a triple of roots and consider $(L_\alpha,L_\beta,L_{\alpha+\beta})$ the complex Dirac structures associated to these roots such that $L_\alpha = \textnormal{span}_\mathbb{C}\{ A_\alpha, S_\alpha\}$, $L_\beta = \textnormal{span}_\mathbb{C}\{ A_\beta+i\varepsilon_\beta S_\beta, A^\ast _\beta +i\varepsilon_\beta S_\beta ^\ast\}$ and $L_{\alpha+\beta} = \textnormal{span}_\mathbb{C}\{ a_{\alpha+\beta}A_{\alpha+\beta}+ b_{\alpha+\beta}A^\ast _{\alpha+\beta}, a_{\alpha+\beta}S_{\alpha+\beta} + b_{\alpha+\beta} S^\ast _{\alpha+\beta}\}$ with $a_{\alpha+\beta},b_{\alpha+\beta} \neq 0$ and $\frac{b_{\alpha+\beta}}{a_{\alpha+\beta}} \in \mathbb{R}$. Then we will refer to the triple $(L_\alpha,L_\beta,L_{\alpha+\beta})$ as $(A,C,D1)$, because $L_\alpha$ is associated to case (A), $L_\beta$ to case (C) and $L_{\alpha+\beta}$ to case (D1).

\subsection{Real index zero}\label{rizero}

The triple $(L_\alpha,L_\beta,L_{\alpha+\beta})$ has real index zero when $L_\alpha$ ,$L_\beta$ and $L_{\alpha+\beta}$ have simultaneously real index zero, that is, they are associated with a generalized complex structure. Then the possible combinations are:

\begin{table}[htb!]
\centering
\begin{tabular}{c|c|c|l}
$L_\alpha$ & $L_\beta$ & $L_{\alpha + \beta}$ & $\Nij = 0$ \\ \hline
C & C & C & $\varepsilon_{\alpha+\beta} - \varepsilon_\alpha - \varepsilon_\beta + \varepsilon_\alpha \varepsilon_\beta \varepsilon_{\alpha+\beta} = 0$\\
C & C & D2 & $\varepsilon_\alpha = -\varepsilon_\beta$\\
C & D2 & C & $\varepsilon_\alpha = \varepsilon_\beta$\\
D2 & D2 & D2 & Equation \ref{Integrabilityconditions}
\end{tabular} 
\end{table}
\FloatBarrier





These are exactly the cases of invariant complex Dirac structures associated with invariant generalized complex structures described in \cite{VS}.

\subsection{Real index two}\label{ritwo}

We are going to analyze triples of subspaces $(L_\alpha,L_\beta,L_{\alpha+\beta})$ such that for two roots we have $\dim (L_\gamma \cap \overline{L_\gamma}) = 0$ and for the other root we have $\dim (L_\gamma \cap \overline{L_\gamma})=2$ where $\gamma \in \{ \alpha,\beta,\alpha+\beta \}$. Observe that the subspaces $L_\gamma$ such that $\dim (L_\gamma \cap \overline{L_\gamma})=0$ are associated to generalized complex structures. Thus we have the following cases:
\begin{enumerate}
\item\label{twocomplex} Two of complex type;
\item\label{twononcomplex} Two of symplectic type (up to $B$-transformation);
\item\label{onecomplexonenoncomplex} One of complex type and one of symplectic type (up to $B$-transformation).
\end{enumerate}

First of all, note that item (\ref{onecomplexonenoncomplex}) cannot happen, because it is not a case presented in Table \ref{tableinv}.

We start by studying item (\ref{twocomplex}), that is, the cases where appear two subspaces associated to invariant generalized complex structures of complex type. Given the triple $(L_\alpha,L_\beta,L_{\alpha+\beta})$ then the subspaces associated with the complex type can be $(L_\alpha,L_\beta)$ and $(L_\beta,L_{\alpha+\beta})$. The case $(L_\alpha,L_{\alpha+\beta})$ coincides with $(L_\beta,L_{\alpha+\beta})$ since $\alpha+\beta = \beta + \alpha$.

\begin{proposition}\label{ccx}
Let $(\alpha,\beta,\alpha+\beta)$ be a triple of roots and consider $(L_\alpha,L_\beta,L_{\alpha+\beta})$ the associated triple of invariant complex Dirac structures such that $L_\alpha$ and $L_\beta$ come from invariant generalized complex structures of complex type, that is, 
\[
L_\gamma = \textnormal{span}_\mathbb{C} \{ A_\gamma + \varepsilon_\gamma i S_\gamma, A^\ast _\gamma + \varepsilon_\gamma iS^\ast _\gamma\},
\]
with $\varepsilon_\gamma = \pm 1$ where $\gamma \in \{\alpha,\beta\}$. Then $\Nij|_L = 0$ if and only if $\varepsilon_\alpha =- \varepsilon_\beta$ for every maximal isotropic subspace $L_{\alpha+\beta}$ such that $\dim_\mathbb{C} (L_{\alpha+\beta} \cap \overline{L_{\alpha+\beta}}) =2$, where $L = L_\alpha \cup L_\beta \cup L_{\alpha+\beta}$.
\end{proposition}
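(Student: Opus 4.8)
The plan is to exploit the trilinearity of the Nijenhuis operator together with the explicit list of its nonzero values on the basis $\{A_\gamma, S_\gamma, A_\gamma^\ast, S_\gamma^\ast\}$ displayed just before Proposition \ref{3tangent}. Write $e_1 = A_\alpha + \varepsilon_\alpha i S_\alpha$, $e_2 = A_\alpha^\ast + \varepsilon_\alpha i S_\alpha^\ast$ for a basis of $L_\alpha$ and $f_1 = A_\beta + \varepsilon_\beta i S_\beta$, $f_2 = A_\beta^\ast + \varepsilon_\beta i S_\beta^\ast$ for a basis of $L_\beta$. Since every nonzero tabulated value of $\Nij$ involves exactly one vector from each of the three root-space blocks, any triple with two arguments taken from the same block vanishes; hence $\Nij|_L = 0$ is equivalent to the vanishing of $\Nij(e_i, f_j, g)$ for $i,j \in \{1,2\}$, as $g$ ranges over the four real vectors $A_{\alpha+\beta}, S_{\alpha+\beta}, A_{\alpha+\beta}^\ast, S_{\alpha+\beta}^\ast$ and then extends by $\mathbb{C}$-linearity in the third slot to any basis vector of $L_{\alpha+\beta}$.

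The key simplification is a \emph{parity observation} read off from the list: every nonzero value of $\Nij$ has exactly one starred argument among its three entries. Consequently $\Nij(e_2, f_2, g) = 0$ for all $g$ (two starred entries already present), and for the remaining pairs $(e_i, f_j)$ only those $g$ of the complementary starred parity survive. Expanding the surviving brackets and substituting the tabulated values, each nonzero contribution collapses to a scalar multiple of either $N(\varepsilon_\alpha + \varepsilon_\beta)$ or $N(1 + \varepsilon_\alpha \varepsilon_\beta)$, where $N \neq 0$ denotes the common nonzero Nijenhuis value. Because $\varepsilon_\alpha, \varepsilon_\beta \in \{\pm 1\}$, both expressions vanish if and only if $\varepsilon_\alpha = -\varepsilon_\beta$. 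This settles the forward implication for an arbitrary $L_{\alpha+\beta}$ at once, since the basis vectors of $L_{\alpha+\beta}$ are $\mathbb{C}$-combinations of the four real vectors and the vanishing on each of them forces $\Nij|_L = 0$.

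For the converse I would argue contrapositively: assuming $\varepsilon_\alpha = \varepsilon_\beta$, I exhibit a single nonzero value in each of the three admissible real-index-two forms of $L_{\alpha+\beta}$, namely cases (1), (2) and (4.1) of (\ref{listaDirac2}). For case (1) the vector $A_{\alpha+\beta} \in L_{\alpha+\beta}$ gives $\Nij(e_1, f_2, A_{\alpha+\beta}) = -iN(\varepsilon_\alpha + \varepsilon_\beta) \neq 0$; for case (2) the vector $A_{\alpha+\beta}^\ast \in L_{\alpha+\beta}$ gives $\Nij(e_1, f_1, A_{\alpha+\beta}^\ast) = iN(\varepsilon_\alpha + \varepsilon_\beta) \neq 0$; and for case (4.1), using $\Nij(e_1, f_1, A_{\alpha+\beta}) = 0$, the basis vector $a_1 A_{\alpha+\beta} + b_1 A_{\alpha+\beta}^\ast$ (with $b_1 \neq 0$) gives $\Nij(e_1, f_1, a_1 A_{\alpha+\beta} + b_1 A_{\alpha+\beta}^\ast) = b_1\, iN(\varepsilon_\alpha + \varepsilon_\beta) \neq 0$. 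In every case $\Nij|_L \neq 0$, which establishes the equivalence for every maximal isotropic $L_{\alpha+\beta}$ of real index two.

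The main obstacle — in fact the only delicate point — is the bookkeeping of signs and of the starred/unstarred parity across the twelve tabulated nonzero values. Once the parity observation is in place it collapses the sixteen a priori possible triples $(e_i, f_j, g)$ down to a handful, all governed by the two scalar expressions $\varepsilon_\alpha + \varepsilon_\beta$ and $1 + \varepsilon_\alpha\varepsilon_\beta$, so no genuinely hard computation remains.
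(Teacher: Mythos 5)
Your proposal is correct and follows essentially the same route as the paper: a case split over the three real-index-two forms of $L_{\alpha+\beta}$, with every surviving term reducing to a multiple of $\varepsilon_\alpha+\varepsilon_\beta$ or $1+\varepsilon_\alpha\varepsilon_\beta$, both of which vanish exactly when $\varepsilon_\alpha=-\varepsilon_\beta$. The only difference is presentational — your parity observation (each nonzero tabulated value of $\Nij$ has exactly one starred slot) and the explicit evaluations such as $\Nij(e_1,f_1,A^\ast_{\alpha+\beta})=iN(\varepsilon_\alpha+\varepsilon_\beta)$ make transparent the computations that the paper merely asserts, and your signs check out against the table.
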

\begin{proof}
Suppose that $L_\alpha$ and $L_\beta$ are invariant complex Dirac structures associated to invariant generalized complex structures of complex type. Thus, $L_\alpha = \textnormal{span}_\mathbb{C} \{ A_\alpha+i\varepsilon_\alpha S_\alpha, A^\ast _\alpha +i\varepsilon_\alpha S^\ast _\alpha\}$ and $L_\beta = \textnormal{span}_\mathbb{C} \{ A_\beta+i\varepsilon_\beta S_\beta, A^\ast _\beta +i\varepsilon_\beta S^\ast _\beta\}$ where $\varepsilon_\alpha,\varepsilon_\beta=\pm 1$. Meanwhile $L_{\alpha+\beta}$ can be any of the three types of structure with real index two, respecting certain conditions so that $\Nij|_L = 0$. Such conditions will tell us the possible combinations of signals $\varepsilon_\alpha$ and $\varepsilon_\beta$. Let us analyze case by case:
\begin{itemize}
\item[a.] If $L_{\alpha+\beta} = \textnormal{span}_\mathbb{C} \{ A_{\alpha+\beta}, S_{\alpha+\beta}\}$, then $\Nij|_L = 0$ if and only if $\varepsilon_\alpha =- \varepsilon_\beta$.

\item[b.] If $L_{\alpha+\beta} = \textnormal{span}_\mathbb{C} \{ -S^\ast _{\alpha+\beta}, A^\ast _{\alpha+\beta}\}$, then $\Nij|_L = 0$ if and only if $\varepsilon_\alpha =- \varepsilon_\beta$.

\item[c.] If $L_{\alpha+\beta} = \textnormal{span}_\mathbb{C} \{ a_{\alpha+\beta} A_{\alpha+\beta}+b_{\alpha+\beta} A^\ast _{\alpha+\beta}, a_{\alpha+\beta} S_{\alpha+\beta}+b_{\alpha+\beta} S^\ast _{\alpha+\beta}\}$ with $\frac{b_{\alpha+\beta}}{a_{\alpha+\beta}}\in \mathbb{R}$, then $\Nij|_L=0$ if and only if 
\begin{eqnarray}\label{sistemasinal}
\left\lbrace \begin{array}{l}
\varepsilon_\beta + \varepsilon_\alpha = 0\\
1 + \varepsilon_\alpha\varepsilon_\beta = 0.
\end{array} \right.
\end{eqnarray}
Note that (\ref{sistemasinal}) is satisfied if and only if $\varepsilon_\alpha =- \varepsilon_\beta$. 
\end{itemize}
\end{proof}

\begin{proposition}\label{xcc}
Let $(\alpha,\beta,\alpha+\beta)$ be a triple of roots and consider $(L_\alpha,L_\beta,L_{\alpha+\beta})$ the associated triple of invariant complex Dirac structures such that $L_\beta$ and $L_{\alpha+\beta}$ come from invariant generalized complex structures of complex type, that is, 
\[
L_\gamma = \textnormal{span}_\mathbb{C} \{ A_\gamma + \varepsilon_\gamma i S_\gamma, A^\ast _\gamma + \varepsilon_\gamma iS^\ast _\gamma\},
\]
with $\varepsilon_\gamma = \pm 1$ where $\gamma \in \{\beta,\alpha+\beta\}$. Then $\Nij|_L = 0$ if and only if $\varepsilon_\beta = \varepsilon_{\alpha+\beta}$ for every maximal isotropic subspace $L_{\alpha}$ such that $\dim_\mathbb{C} (L_{\alpha} \cap \overline{L_{\alpha}}) =2$, where $L = L_\alpha \cup L_\beta \cup L_{\alpha+\beta}$.
\end{proposition}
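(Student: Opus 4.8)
The plan is to follow the same case analysis used in the proof of Proposition \ref{ccx}, but with the roles of the subspaces permuted: now it is $L_\alpha$ that carries real index two, so by Proposition \ref{listaDirac} together with the splitting (\ref{listaDirac2}) it must be one of the three types (1), (2) or (4.1), while $L_\beta$ and $L_{\alpha+\beta}$ are fixed of complex type with signs $\varepsilon_\beta,\varepsilon_{\alpha+\beta}$. I would treat these three possibilities for $L_\alpha$ as separate cases and show that in each one the vanishing of $\Nij|_L$ is equivalent to the single scalar condition $\varepsilon_\beta\varepsilon_{\alpha+\beta}=1$, that is, $\varepsilon_\beta=\varepsilon_{\alpha+\beta}$.

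The computational engine is the list of nonzero values of $\Nij$ on basis triples recorded before Proposition \ref{3tangent}: every nonzero value equals $\pm N$ for a single nonzero constant $N:=\Nij(A_\alpha,S_\beta,A^\ast_{\alpha+\beta})$, and each such triple contains exactly one dual (starred) vector together with an odd number of $S$'s. This bookkeeping is what makes the argument short. Since $L_\beta$ and $L_{\alpha+\beta}$ are spanned by $A_\gamma+i\varepsilon_\gamma S_\gamma$ (no star) and $A^\ast_\gamma+i\varepsilon_\gamma S^\ast_\gamma$ (one star) for $\gamma\in\{\beta,\alpha+\beta\}$, a triple $\Nij(X,Y,Z)$ with $X\in L_\alpha$, $Y\in L_\beta$, $Z\in L_{\alpha+\beta}$ survives only when the total number of stars among the chosen summands is exactly one. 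First I would expand each of the eight basis triples by multilinearity, discard every term with zero or with two or more stars, and substitute the tabulated value ($\pm N$ or $0$) for the survivors. For $L_\alpha$ of type (1) (unstarred generators $A_\alpha,S_\alpha$) the surviving contributions come only from the starred generator $A^\ast_\gamma+i\varepsilon_\gamma S^\ast_\gamma$ of exactly one of $L_\beta,L_{\alpha+\beta}$; collecting the factors of $i$ one finds that the $A_\alpha$ generator produces $iN(\varepsilon_\beta-\varepsilon_{\alpha+\beta})$ and the $S_\alpha$ generator produces $N(1-\varepsilon_\beta\varepsilon_{\alpha+\beta})$, and both vanish precisely when $\varepsilon_\beta=\varepsilon_{\alpha+\beta}$. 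The type (2) case ($A^\ast_\alpha,-S^\ast_\alpha$) is the mirror image: now $X$ already carries the single star, so only the unstarred generators of $L_\beta$ and $L_{\alpha+\beta}$ contribute, and the same two scalar conditions reappear.

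The only case needing more care, and the step I expect to be the main obstacle, is type (4.1), where the generators $a_1A_\alpha+b_1A^\ast_\alpha$ and $a_1S_\alpha+b_1S^\ast_\alpha$ mix a starred and an unstarred part. Here I would split each $\Nij(W,Y,Z)$ according to which part of $W$ is used: the unstarred part contributes exactly when the single star is supplied by $L_\beta$ or $L_{\alpha+\beta}$, while the starred part contributes only against the unstarred generators of both $L_\beta$ and $L_{\alpha+\beta}$, any term using two starred pieces being zero. Since $a_1,b_1\neq 0$, each resulting equation again factors as a nonzero multiple of $\varepsilon_\beta-\varepsilon_{\alpha+\beta}$ or of $1-\varepsilon_\beta\varepsilon_{\alpha+\beta}$, so $\varepsilon_\beta=\varepsilon_{\alpha+\beta}$ is necessary and sufficient. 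Note that the reality hypothesis $b_1/a_1\in\mathbb{R}$ plays no role in the involutivity computation; it is only what guarantees that $L_\alpha$ has real index two, as required by the statement. Assembling the three cases yields the proposition.
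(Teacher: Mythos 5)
Your proposal is correct and follows essentially the same route as the paper's proof: fix $L_\beta,L_{\alpha+\beta}$ of complex type and run a case analysis over the three real-index-two possibilities (1), (2), (4.1) for $L_\alpha$, reducing $\Nij|_L=0$ in each case to the pair of conditions $\varepsilon_{\alpha+\beta}-\varepsilon_\beta=0$ and $1-\varepsilon_\beta\varepsilon_{\alpha+\beta}=0$, i.e.\ $\varepsilon_\beta=\varepsilon_{\alpha+\beta}$. The star-count/$S$-parity bookkeeping you introduce is a correct and somewhat more explicit organization of the same computation the paper carries out.
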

\begin{proof}
Suppose that $L_\beta$ and $L_{\alpha+\beta}$ are invariant complex Dirac structures associated to invariant generalized complex structures of complex type. Thus, $L_\beta = \textnormal{span} _\mathbb{C}\{ A_\beta+i\varepsilon_\beta S_\beta, A^\ast _\beta +i\varepsilon_\beta S^\ast _\beta\}$ and $L_{\alpha+\beta} = \textnormal{span}_\mathbb{C} \{ A_{\alpha+\beta}+i\varepsilon_{\alpha+\beta} S_{\alpha+\beta}, A^\ast _{\alpha+\beta} +i\varepsilon_{\alpha+\beta} S^\ast _{\alpha+\beta}\}$ where $\varepsilon_\beta , \varepsilon_{\alpha+\beta}=\pm 1$. Meanwhile $L_{\alpha}$ can be any of the three types of structure with real index two, respecting certain conditions so that $\Nij|_L = 0$. Let us analyze case by case:
\begin{itemize}
\item[a.] If $L_{\alpha} = \textnormal{span}_\mathbb{C} \{ A_{\alpha}, S_{\alpha}\}$, then $\Nij|_L = 0$ if and only if $\varepsilon_\beta = \varepsilon_{\alpha+\beta}$.

\item[b.] If $L_{\alpha} = \textnormal{span}_\mathbb{C} \{ -S^\ast _{\alpha}, A^\ast _{\alpha}\}$, then $\Nij|_L = 0$ if and only if $\varepsilon_\beta = \varepsilon_{\alpha+\beta}$.

\item[c.] If $L_{\alpha} = \textnormal{span}_\mathbb{C} \{ aA_{\alpha}+bA^\ast _{\alpha}, aS_{\alpha}+bS^\ast _{\alpha}\}$, then $\Nij|_L=0$ if and only if 
\begin{eqnarray}\label{sistemasinal2}
\left\lbrace \begin{array}{l}
\varepsilon_{\alpha+\beta} - \varepsilon_\beta = 0\\
1 - \varepsilon_\beta\varepsilon_{\alpha+\beta} = 0.
\end{array} \right.
\end{eqnarray}
Note that (\ref{sistemasinal2}) is satisfied if and only if $\varepsilon_\beta = \varepsilon_{\alpha+\beta}$. 
\end{itemize}
\end{proof}

Finally we will study case (\ref{twononcomplex}), that is, the cases where appear two invariant complex Dirac structures associated to invariant generalized complex structures of symplectic type (up to $B$-transformation). We have four cases to analyze, namely:
\begin{itemize}
\item[(2.1)] Suppose $L_\alpha = \textnormal{span}_\mathbb{C} \{ A_\alpha, S_\alpha\}$, $L_\beta = \textnormal{span}_\mathbb{C} \{ x_\beta A_\beta+(a_\beta -i)A^\ast _\beta, x_\beta S_\beta +(a_\beta -i)S^\ast _\beta\}$ and $L_{\alpha+\beta} = \textnormal{span}_\mathbb{C} \{ x_{\alpha+\beta} A_{\alpha+\beta}+(a_{\alpha+\beta} -i)A^\ast _{\alpha+\beta}, x_{\alpha+\beta} S_{\alpha+\beta} +(a_{\alpha+\beta} -i)S^\ast _{\alpha+\beta}\}$. We have $\Nij|_L = 0$ if and only if $\frac{x_\beta}{a_\beta - i} = \frac{x_{\alpha+\beta}}{a_{\alpha+\beta}-i}$ which happens if and only if $x_\beta = x_{\alpha+\beta}$ and $a_\beta = a_{\alpha+\beta}$.

\item[(2.2)] Suppose $L_\alpha = \textnormal{span}_\mathbb{C} \{ x_\alpha A_\alpha+(a_\alpha - i)A^\ast _\alpha, x_\alpha S_\alpha+(a_\alpha -i)S^\ast _\alpha \}$, $L_\beta = \textnormal{span}_\mathbb{C} \{ x_\beta A_\beta+(a_\beta -i)A^\ast _\beta, x_\beta S_\beta +(a_\beta -i)S^\ast _\beta\}$ and $L_{\alpha+\beta} = \textnormal{span}_\mathbb{C} \{ A_{\alpha+\beta}, S_{\alpha+\beta}\}$. We have $\Nij|_L = 0$ if and only if $\frac{x_\alpha}{a_\alpha - i} = -\frac{x_{\beta}}{a_{\beta}-i}$ which happens if and only if $x_\alpha = - x_{\beta}$ and $a_\alpha = a_{\beta}$.

\item[(2.3)] Suppose $L_\alpha = \textnormal{span}_\mathbb{C} \{ a_\alpha A_\alpha +b_\alpha A^\ast _\alpha, a_\alpha S_\alpha+b_\alpha S^\ast _\alpha \}$ with $\frac{b_\alpha}{a_\alpha}\in \mathbb{R}$, $L_\beta = \textnormal{span}_\mathbb{C} \{ x_\beta A_\beta+(a_\beta -i)A^\ast _\beta, x_\beta S_\beta +(a_\beta -i)S^\ast _\beta\}$ and $L_{\alpha+\beta} = \textnormal{span}_\mathbb{C} \{ x_{\alpha+\beta} A_{\alpha+\beta}+(a_{\alpha+\beta} -i)A^\ast _{\alpha+\beta}, x_{\alpha+\beta} S_{\alpha+\beta} +(a_{\alpha+\beta} -i)S^\ast _{\alpha+\beta}\}$. We have $\Nij|_L = 0$ if and only if
\[
a_\alpha x_\beta (a_{\alpha+\beta}-i)-a_\alpha(a_\beta -i)x_{\alpha+\beta} - b_\alpha x_\beta x_{\alpha+\beta} = 0,
\]
which is equivalent to
\begin{eqnarray}\label{twononcomplex2}
x_\beta (a_{\alpha+\beta}-i)-(a_\beta -i)x_{\alpha+\beta} - \frac{b_\alpha}{a_\alpha} x_\beta x_{\alpha+\beta} = 0.
\end{eqnarray}
Separating the real and complex part, we have that (\ref{twononcomplex2}) is satisfied when
\begin{eqnarray}\label{sisttwononcomplex}
\left\lbrace \begin{array}{ll}
x_\beta - x_{\alpha+\beta} = 0\\
x_\beta a_{\alpha+\beta} - a_\beta x_{\alpha+\beta} - \frac{b_\alpha}{a_\alpha} x_\beta x_{\alpha+\beta} = 0.
\end{array}\right.
\end{eqnarray}
From the second expression of (\ref{sisttwononcomplex}) we obtain 
\[
\frac{b_\alpha}{a_\alpha} = \frac{x_\beta a_{\alpha +\beta} - a_\beta x_{\alpha +\beta}}{x_\beta x_{\alpha+\beta}} = \frac{a_{\alpha+\beta}}{x_{\alpha+\beta}} - \frac{a_\beta}{x_\beta}.
\]

\item[(2.4)] Suppose $L_\alpha = \textnormal{span}_\mathbb{C} \{ x_\alpha A_\alpha+(a_\alpha - i)A^\ast _\alpha, x_\alpha S_\alpha+(a_\alpha -i)S^\ast _\alpha \}$, $L_\beta = \textnormal{span}_\mathbb{C} \{ x_\beta A_\beta+(a_\beta -i)A^\ast _\beta, x_\beta S_\beta +(a_\beta -i)S^\ast _\beta\}$ and $L_{\alpha+\beta} = \textnormal{span}_\mathbb{C} \{ a_{\alpha+\beta} A_{\alpha+\beta}+b_{\alpha+\beta} A^\ast _{\alpha+\beta}, a_{\alpha+\beta} S_{\alpha+\beta} + b_{\alpha+\beta} S^\ast _{\alpha+\beta}\}$. We have $\Nij|_L = 0$ if and only if
\[
x_\alpha x_\beta b_{\alpha+\beta}-x_\alpha(a_\beta -i)a_{\alpha+\beta} - (a_\alpha -i) x_\beta a_{\alpha+\beta} = 0.
\]
With an argument analogous to the previous item we obtain 
\begin{eqnarray*}
x_\beta + x_{\alpha+\beta} = 0 \ \textnormal{ and } \ \frac{b_{\alpha+\beta}}{a_{\alpha+\beta}} = \frac{a_\beta}{x_\beta}+\frac{a_\alpha}{x_\alpha}.
\end{eqnarray*}

\end{itemize}

Putting all this information together we have:

\begin{proposition}
Let $(\alpha,\beta,\alpha+\beta)$ be a triple of roots. The associated triple of invariant complex Dirac structures $(L_\alpha,L_\beta,L_{\alpha+\beta})$ of real index two is involutive if it coincides with one of the possibilities that appears in the following table 
\emph{
\begin{table}[htb!]
\begin{center}
\begin{tabular}{c|c|c|l}
$L_\alpha$ & $L_\beta$ & $L_{\alpha + \beta}$ & $\Nij = 0$ \\ \hline
C & C & A {\scriptsize{$\vee$}} B {\scriptsize{$\vee$}} D1 & Proposition \ref{ccx} \\
A {\scriptsize{$\vee$}} B {\scriptsize{$\vee$}} D1 & C & C & Proposition \ref{xcc} \\
A & D2 & D2 & $x_\beta = x_{\alpha+\beta}$ and $a_\beta = a_{\alpha+\beta}$ \\
D2 & D2 & A & $x_\alpha = - x_{\beta}$ and $a_\alpha = a_{\beta}$ \\
D1 & D2 & D2 & $x_\beta = x_{\alpha+\beta}$ and $\frac{b_\alpha}{a_\alpha} = \frac{a_{\alpha+\beta}}{x_{\alpha+\beta}} - \frac{a_\beta}{x_\beta}$\\
D2 & D2 & D1 & $x_\beta = - x_{\alpha+\beta}$ and $\frac{b_{\alpha+\beta}}{a_{\alpha+\beta}} = \frac{a_\beta}{x_\beta}+\frac{a_\alpha}{x_\alpha}$
\end{tabular} 
\end{center}
\end{table}}
\FloatBarrier
\end{proposition}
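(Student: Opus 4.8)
The plan is to recognize that this proposition is a compilation of the case analysis carried out throughout Section \ref{ritwo}, so the proof amounts to organizing those computations into the stated table rather than performing new calculations. The starting observation is that a triple $(L_\alpha, L_\beta, L_{\alpha+\beta})$ has real index two precisely when exactly two of its three components have real index zero --- hence come from invariant generalized complex structures, which by the analysis following Proposition \ref{listaDirac} are either of complex type (case $3$) or of symplectic type (case $4.2$) --- while the remaining component has real index two and is therefore of type $1$, $2$, or $4.1$. This splits the argument into the three configurations introduced at the beginning of Section \ref{ritwo}: (\ref{twocomplex}) two components of complex type, (\ref{twononcomplex}) two of symplectic type, and (\ref{onecomplexonenoncomplex}) one of each.

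First I would dispose of configuration (\ref{onecomplexonenoncomplex}): a component of complex type and a component of symplectic type cannot coexist in an involutive triple, since no such combination appears among the rows of Table \ref{tableinv}; hence this configuration contributes nothing. Next, for configuration (\ref{twocomplex}) I would invoke Propositions \ref{ccx} and \ref{xcc} directly. The two complex-type components may occupy the pairs of positions $(L_\alpha, L_\beta)$ or $(L_\beta, L_{\alpha+\beta})$, the pair $(L_\alpha, L_{\alpha+\beta})$ reducing to the latter after interchanging the roles of $\alpha$ and $\beta$, which fixes $\alpha+\beta$; so exactly two placements of the real-index-two component arise. Proposition \ref{ccx} handles the case where $L_\alpha, L_\beta$ are of complex type and $L_{\alpha+\beta}$ (of type $1$, $2$, or $4.1$) is the real-index-two component, giving the first row, and Proposition \ref{xcc} handles the case where $L_\beta, L_{\alpha+\beta}$ are of complex type and $L_\alpha$ is the real-index-two component, giving the second row.

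For configuration (\ref{twononcomplex}) I would assemble the four subcases (2.1)--(2.4) computed just above the statement, each obtained by restricting the Nijenhuis operator to $L = L_\alpha \cup L_\beta \cup L_{\alpha+\beta}$ and reading off when it vanishes via the identities listed before Proposition \ref{3tangent}. Subcase (2.1) yields the row $(1, 4.2, 4.2)$ with $x_\beta = x_{\alpha+\beta}$, $a_\beta = a_{\alpha+\beta}$; subcase (2.2) yields $(4.2, 4.2, 1)$ with $x_\alpha = -x_\beta$, $a_\alpha = a_\beta$; subcase (2.3) yields $(4.1, 4.2, 4.2)$ together with $\frac{b_\alpha}{a_\alpha} = \frac{a_{\alpha+\beta}}{x_{\alpha+\beta}} - \frac{a_\beta}{x_\beta}$; and subcase (2.4) yields $(4.2, 4.2, 4.1)$ together with $\frac{b_{\alpha+\beta}}{a_{\alpha+\beta}} = \frac{a_\beta}{x_\beta} + \frac{a_\alpha}{x_\alpha}$. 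Here the real-index-two component is always of type $1$ or $4.1$, never of type $2$, precisely because Table \ref{tableinv} contains no row pairing type $2$ with two symplectic components. Collecting the six rows completes the table.

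The analytic content has already been carried out in the subcase computations, so the work here is purely combinatorial bookkeeping; accordingly, the main obstacle I anticipate is exhaustiveness rather than difficulty. Concretely, one must verify that every admissible triple of real index two --- taking into account which of the three positions the real-index-two component occupies and which complex/symplectic types the two real-index-zero components take --- is accounted for by exactly one row, with no involutive possibility omitted and no spurious row introduced. The distinguished role of $\alpha+\beta$ as the sum makes the three positions inequivalent, so the symmetry reduction under $\alpha \leftrightarrow \beta$ used in configuration (\ref{twocomplex}) must be applied with care, and the separate placements treated in configuration (\ref{twononcomplex}) must be checked to cover all remaining cases; once this is confirmed, the listed conditions are in fact not merely sufficient but characterize involutivity.
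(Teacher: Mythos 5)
Your proposal matches the paper's own treatment: the paper proves this proposition exactly by discarding the mixed complex/symplectic configuration via Table \ref{tableinv}, citing Propositions \ref{ccx} and \ref{xcc} for the two rows with two complex-type components, and compiling subcases (2.1)--(2.4) for the two-symplectic-type rows, using the $\alpha\leftrightarrow\beta$ symmetry to reduce the remaining placements. Your bookkeeping, including the observation that a type~2 component never pairs with two symplectic ones, is the same argument the paper summarizes with ``putting all this information together.''
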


\begin{notation}
The symbol {\scriptsize{$\vee$}} means, for example, in the first row that we can have the triples $(C,C,A)$, $(C,C,B)$ or $(C,C,D1)$.
\end{notation}

 




\subsection{Real index four}\label{rifour}
Here we are considering triples of subspaces $(L_\alpha,L_\beta ,L_{\alpha+\beta})$ such that for two roots we have $\dim (L_\gamma \cap \overline{L_\gamma}) = 2$ and for the other root we have $\dim (L_\gamma \cap \overline{L_\gamma})=0$ where $\gamma \in \{ \alpha,\beta,\alpha+\beta \}$. That means we have one root associated with an invariant complex Dirac structure that comes from an invariant generalized complex structure. 

Due to Proposition \ref{2dualspaces} one has that triples $(L_\alpha,L_\beta ,L_{\alpha+\beta})$ such that two subspaces are given by $L_\gamma = \textnormal{span}_{\mathbb{C}} \{-S_\gamma ^\ast, A_{\gamma} ^\ast\}$, with $\gamma \in \{\alpha,\beta,\alpha+\beta\}$, and the subspace associated to the other root is of real index zero, then $(L_\alpha,L_\beta ,L_{\alpha+\beta})$ is involutive of real index four.

Note that, according to Table \ref{tableinv}, triples $(L_\alpha,L_\beta ,L_{\alpha+\beta})$ of real index four are necessarily described by the last row. Since we have an extra algebraic condition to analyze, we have the following two cases:
\begin{enumerate}
\item Suppose that $L_\alpha$ is of symplectic type (up to $B$-transformation), that is, $L_\alpha = \textnormal{span}_\mathbb{C} \{ x_\alpha A_\alpha +(a_\alpha - i) A^\ast _\alpha, x_\alpha S_\alpha +(a_\alpha -i)S^\ast _\alpha\}$, $L_\beta = \textnormal{span}_\mathbb{C} \{ a_\beta A_\beta +b_\beta A^\ast _\beta, a_\beta S_\beta +b_\beta S^\ast _\beta \}$ and $L_{\alpha+ \beta} = \textnormal{span}_\mathbb{C} \{ a_{\alpha+\beta} A_{\alpha+\beta} +b_{\alpha+\beta} A^\ast _{\alpha+\beta}, a_{\alpha+\beta} S_{\alpha+\beta} +b_{\alpha+\beta} S^\ast _{\alpha+\beta} \}$ with $\frac{b_\beta}{a_\beta},\frac{b_{\alpha+\beta}}{a_{\alpha+\beta}} \in \mathbb{R}$. In this case, we must have
\[
x_\alpha a_\beta b_{\alpha+\beta} - x_\alpha b_\beta a_{\alpha+\beta} - (a_\alpha -i)a_\beta a_{\alpha+\beta} = 0.
\]
Since $a_\beta,a_{\alpha+\beta} \neq 0$ the last expression can be rewritten as
\[
x_\alpha \frac{b_{\alpha+\beta}}{a_{\alpha+\beta}} - x_\alpha \frac{b_\beta}{a_\beta} - (a_\alpha - i) = 0,
\]
which is an absurd because $x_\alpha,a_\alpha,\frac{b_\beta}{a_\beta},\frac{b_{\alpha+\beta}}{a_{\alpha+\beta}} \in \mathbb{R}$. Therefore this case is not involutive.

\item Suppose $L_{\alpha+\beta}$ of symplectic type (up to $B$-transformation), that is, $L_\alpha = \textnormal{span}_\mathbb{C} \{ a_\alpha A_\alpha +b_\alpha A^\ast _\alpha, a_\alpha S_\alpha +b_\alpha S^\ast _\alpha\}$, $L_\beta = \textnormal{span}_\mathbb{C} \{ a_\beta A_\beta +b_\beta A^\ast _\beta, a_\beta S_\beta +b_\beta S^\ast _\beta \}$ and $L_{\alpha+ \beta} = \textnormal{span}_\mathbb{C} \{ x_{\alpha+\beta} A_{\alpha+\beta} +(a_{\alpha+\beta}-i) A^\ast _{\alpha+\beta}, x_{\alpha+\beta} S_{\alpha+\beta} +(a_{\alpha+\beta}-i) S^\ast _{\alpha+\beta} \}$ with $\frac{b_\alpha}{a_\alpha},\frac{b_\beta}{a_\beta} \in \mathbb{R}$. In this case, we must have
\[
a_\alpha a_\beta (a_{\alpha+\beta} - i) - a_\alpha b_\beta x_{\alpha+\beta} - b_\alpha a_\beta x_{\alpha +\beta} =0.
\]
Since $a_\alpha,a_\beta \neq 0$ the last expression can be rewritten as
\[
(a_{\alpha+\beta}-i) - \frac{b_\beta}{a_\beta}x_{\alpha+\beta} - \frac{b_\alpha}{a_\alpha} x_{\alpha+\beta} = 0,
\]
which cannot happen again because $\frac{b_\alpha}{a_\alpha},\frac{b_\beta}{a_\beta},x_{\alpha+\beta},a_{\alpha+\beta} \in \mathbb{R}$.
\end{enumerate}

Summarizing we obtain the following:

\begin{proposition}
Let $(\alpha,\beta,\alpha+\beta)$ be a triple of roots. The associated triple of invariant complex Dirac structures $(L_\alpha,L_\beta,L_{\alpha+\beta})$ of real index four is involutive if it appears in the following table 

\emph{
\begin{table}[htb!]
\begin{center}
\begin{tabular}{c|c|c|l}
$L_\alpha$ & $L_\beta$ & $L_{\alpha + \beta}$ & $\Nij = 0$ \\ \hline
B & B & C {\scriptsize{$\vee$}} D2 & Proposition \ref{2dualspaces} \\
B & C {\scriptsize{$\vee$}} D2 & B  & Proposition \ref{2dualspaces}
\end{tabular} 
\end{center}
\end{table}}
\FloatBarrier
\end{proposition}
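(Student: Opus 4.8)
The plan is to prove the two implications contained in the statement: that each triple displayed in the table is involutive of real index four, and that these are the only such triples. Throughout I use that, by $(\ref{listaDirac2})$, a triple of real index four must have exactly two entries of real index two---taken from cases $(1)$, $(2)$, $(4.1)$---and one entry of real index zero---case $(3)$ or $(4.2)$---and I use the symmetry $\alpha\leftrightarrow\beta$, which fixes $\alpha+\beta$, exchanges $L_\alpha$ and $L_\beta$, and preserves the vanishing of $\Nij|_L$.

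The easy half is sufficiency. In each row of the table two of the three entries are of case $(2)$, that is $L_\gamma = \textnormal{span}_\mathbb{C}\{-S_\gamma^\ast, A_\gamma^\ast\} = (\mathfrak{u}_\gamma^\ast)_\mathbb{C}$. First I would apply Proposition \ref{2dualspaces} verbatim: having two case-$(2)$ entries already forces $\Nij|_L = 0$, regardless of the third entry. Since that third entry has real index zero, the resulting triple is involutive of real index four. Enumerating the six configurations with two case-$(2)$ entries and one index-zero entry and then reducing modulo $\alpha\leftrightarrow\beta$ collapses them precisely to the two rows shown.

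For necessity I would show that no involutive real-index-four triple with at most one case-$(2)$ entry exists. Every involutive triple occurs in Proposition \ref{3tangent} (all entries case $(1)$, hence real index six), in Proposition \ref{2dualspaces} (already handled), or in a row of Table \ref{tableinv}. A row of Table \ref{tableinv} with two case-$(3)$ entries has real index at most two, so only the rows containing case $(4)$ can reach real index four. In rows $(1,4,4)$ and $(4,4,1)$, reaching real index four forces one case-$(4)$ entry to be of type $(4.1)$ (real ratio $b/a$) and the other of type $(4.2)$ (coefficient $a-i$); the condition $a_1/b_1 = \pm\,a_2/b_2$ then equates a real number with one of nonzero imaginary part, which is impossible. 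In rows $(1,3,4)$ and $(2,3,4)$ the condition constrains $\varepsilon_{\alpha+\beta}$, which exists only if $L_{\alpha+\beta}$ is of complex type $(3)$; but then $L_{\alpha+\beta}$ has real index zero and the triple has real index two, so these rows belong to the analysis of Section \ref{ritwo} rather than here. The only surviving candidate is the row $(4,4,4)$.

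The hard part will be the $(4,4,4)$ case. For real index four exactly one of the three case-$(4)$ subspaces is of symplectic type $(4.2)$ and the other two are of type $(4.1)$, and by $\alpha\leftrightarrow\beta$ it suffices to place the $(4.2)$ factor either at $\alpha$ or at $\alpha+\beta$. In each sub-case I would insert the explicit parametrisations into the relation $a_1a_2b_3 - a_1b_2a_3 - b_1a_2a_3 = 0$ from the last row of Table \ref{tableinv} and divide by the nonzero real product $a_\beta a_{\alpha+\beta}$ (respectively $a_\alpha a_\beta$). The imaginary unit carried by the $(4.2)$ coefficient $a-i$ persists through this reduction, so splitting into real and imaginary parts yields the imaginary equation $\pm 1 = 0$, a contradiction. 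This rules out $(4,4,4)$; combined with sufficiency it shows the displayed table is exactly the list of involutive triples of real index four.
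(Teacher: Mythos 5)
Your proof is correct and follows essentially the same route as the paper: sufficiency via Proposition \ref{2dualspaces}, and necessity by reducing to the $(4,4,4)$ row of Table \ref{tableinv}, where the imaginary part of the coefficient $a_\gamma - i$ carried by the type-$4.2$ factor yields the contradiction. If anything you are more thorough on necessity: the paper simply asserts that real-index-four triples from Table \ref{tableinv} are necessarily described by its last row, whereas you explicitly dispose of the rows $(1,4,4)$, $(4,4,1)$, $(1,3,4)$ and $(2,3,4)$, which a priori can also have real index four when the type-$4$ entry is of type $4.1$.
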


\subsection{Real index six} \label{risix}
This is the last case, here we consider the triples $(L_\alpha,L_\beta,L_{\alpha+\beta})$ where none of the subspaces is of real index zero.

Let $(L_\alpha,L_\beta,L_{\alpha+\beta})$ be a triple with $L_\gamma = \textnormal{span}_\mathbb{C} \{ a_\gamma A_\gamma +b_\gamma A^\ast _\gamma, a_\gamma S_\gamma +b_\gamma S^\ast _\gamma\}$ for all $\gamma \in \{\alpha, \beta,\alpha+\beta \}$. In this case we must have 
\[
a_\alpha a_\beta b_{\alpha+\beta} - a_\alpha b_\beta a_{\alpha+\beta} - b_\alpha a_\beta a_{\alpha+\beta} = 0.
\]
But, since $a_\alpha,a_\beta,a_{\alpha+\beta} \neq 0$ we obtain that
\[
\frac{b_{\alpha+\beta}}{a_{\alpha+\beta}}-\frac{b_\beta}{a_\beta} - \frac{b_\alpha}{a_\alpha} =0,
\]
or equivalently,
\[
\frac{b_{\alpha+\beta}}{a_{\alpha+\beta}}=\frac{b_\beta}{a_\beta} + \frac{b_\alpha}{a_\alpha}.
\]

Therefore the cases of triples with real index six are:

\begin{proposition}
Let $(\alpha,\beta,\alpha+\beta)$ be a triple of roots. The associated triple of invariant complex Dirac structures $(L_\alpha,L_\beta,L_{\alpha+\beta})$ of real index six is involutive if it is one of the possibilities in the following table 

\emph{
\begin{table}[htb!]
\begin{center}
\begin{tabular}{c|c|c|l}
$L_\alpha$ & $L_\beta$ & $L_{\alpha + \beta}$ & $\Nij = 0$ \\ \hline
A & A & A & Proposition \ref{3tangent}\\
B & A & B & Proposition \ref{2dualspaces} \\
B & B & A {\scriptsize{$\vee$}} B {\scriptsize{$\vee$}} D1 & Proposition \ref{2dualspaces} \\
B & D1 & B & Proposition \ref{2dualspaces} \\
D1 & D1 & D1 & $\frac{b_{\alpha+\beta}}{a_{\alpha+\beta}}=\frac{b_\beta}{a_\beta} + \frac{b_\alpha}{a_\alpha}$
\end{tabular} 
\end{center}
\end{table}}
\FloatBarrier

\end{proposition}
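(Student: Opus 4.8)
The plan is to reduce everything to the root‑space summands and then read off the answer from the list of elementary Nijenhuis values displayed just before Proposition~\ref{3tangent}. Since the real index is additive over the summands (the extension of Proposition~\ref{rootdecomposition}), a triple of real index six must have each of $L_\alpha$, $L_\beta$, $L_{\alpha+\beta}$ of real index exactly two, hence each of type (1), (2) or (4.1) as refined in~(\ref{listaDirac2}). As recalled before Proposition~\ref{3tangent}, only triples of roots of the form $(\alpha,\beta,\alpha+\beta)$ can contribute to $\Nij|_L$, so involutivity is equivalent to the vanishing of $\Nij$ on $L_\alpha\cup L_\beta\cup L_{\alpha+\beta}$. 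The single fact that organizes all cases is that every nonzero elementary bracket in that list involves exactly one starred (cotangent) basis vector and two unstarred (tangent) ones; moreover all twelve nonzero values coincide, up to sign, with one constant that I denote $N\neq 0$.

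With this observation the structure of the table is immediate. A subspace of type (1) supplies only tangent vectors, one of type (2) only cotangent vectors, and one of type (4.1) both a tangent and a cotangent part coupled by the fixed real ratio $b_\gamma/a_\gamma$. Hence if all three $L_\gamma$ are of type (1) no cotangent vector is available and $\Nij|_L=0$ automatically, which is Proposition~\ref{3tangent} and gives the row $(1,1,1)$; and if at least two $L_\gamma$ are of type (2) then at most one slot can receive a tangent vector, so no admissible one‑cotangent/two‑tangent term exists and $\Nij|_L=0$, which is Proposition~\ref{2dualspaces} and gives the rows $(2,1,2)$, $(2,2,1\vee2\vee4.1)$ and $(2,4.1,2)$. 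Here I use the symmetry that interchanging the roles of $\alpha$ and $\beta$ (which fixes the distinguished slot $\alpha+\beta$) preserves involutivity, so it suffices to record one representative of each such pair.

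It remains to treat $(4.1,4.1,4.1)$, the only genuinely computational row. Writing $L_\gamma=\textnormal{span}_\mathbb{C}\{a_\gamma A_\gamma+b_\gamma A^\ast_\gamma,\ a_\gamma S_\gamma+b_\gamma S^\ast_\gamma\}$, I would expand $\Nij$ by trilinearity on each of the eight triples of generators; by the one‑star observation only the four triples with character patterns $(A,S,A)$, $(A,A,S)$, $(S,S,S)$ and $(S,A,A)$ can contribute, and one checks that each of them collapses, up to an overall sign, to the same expression
\begin{equation*}
N\bigl(a_\alpha a_\beta b_{\alpha+\beta}-b_\alpha a_\beta a_{\alpha+\beta}-a_\alpha b_\beta a_{\alpha+\beta}\bigr).
\end{equation*}
Since $a_\alpha,a_\beta,a_{\alpha+\beta}\neq 0$, dividing by $a_\alpha a_\beta a_{\alpha+\beta}$ shows that $\Nij|_L=0$ if and only if $\frac{b_{\alpha+\beta}}{a_{\alpha+\beta}}=\frac{b_\beta}{a_\beta}+\frac{b_\alpha}{a_\alpha}$, which is the last row. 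Finally, for completeness one runs through the type‑combinations not reducible to the above and exhibits in each a generator triple on which $\Nij$ is a nonzero multiple of $N$, so that none of them is involutive; this is precisely the content of Table~\ref{tableinv} restricted to types (1), (2) and (4.1). The main obstacle is exactly this bookkeeping: keeping track of the signs of the twelve elementary brackets and confirming that the four surviving generator triples in the all‑$(4.1)$ case produce the single relation above rather than an over‑determined system — and it is the one‑star observation that keeps the computation finite and uniform.
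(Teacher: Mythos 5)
Your proof of the proposition as stated is correct and follows essentially the same route as the paper: a real-index-six triple forces each summand to have real index two, hence to be of type (1), (2) or (4.1); the rows covered by Propositions \ref{3tangent} and \ref{2dualspaces} are dispatched structurally; and the only computation is the $(4.1,4.1,4.1)$ row, which you expand correctly and which collapses to the single relation $\frac{b_{\alpha+\beta}}{a_{\alpha+\beta}}=\frac{b_\beta}{a_\beta}+\frac{b_\alpha}{a_\alpha}$, exactly as in the text. Your ``one-star'' observation --- every nonzero elementary Nijenhuis value contains exactly one covector and two vectors --- is a pleasant uniform packaging of Propositions \ref{3tangent} and \ref{2dualspaces}, and it correctly explains why only the four generator patterns $(A,S,A)$, $(A,A,S)$, $(S,S,S)$, $(S,A,A)$ survive and why they all yield the same relation rather than an over-determined system.

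One caveat: your closing completeness claim --- that every type-combination absent from the table admits a generator triple on which $\Nij$ is a nonzero multiple of $N$, and that this is ``precisely the content of Table \ref{tableinv}'' --- is false, and in fact contradicts Table \ref{tableinv}. That table lists $(1,4,4)$ and $(4,4,1)$ as involutive under the ratio conditions $\frac{a_1}{b_1}=\frac{a_2}{b_2}$ and $\frac{a_1}{b_1}=-\frac{a_2}{b_2}$, and these conditions are satisfiable by the real ratios defining type (4.1). Concretely, for $(1,4.1,4.1)$ with $\frac{b_\beta}{a_\beta}=\frac{b_{\alpha+\beta}}{a_{\alpha+\beta}}$ one computes
\begin{equation*}
\Nij\bigl(A_\alpha,\ a_\beta S_\beta+b_\beta S^\ast_\beta,\ a_{\alpha+\beta}A_{\alpha+\beta}+b_{\alpha+\beta}A^\ast_{\alpha+\beta}\bigr)=N\bigl(a_\beta b_{\alpha+\beta}-b_\beta a_{\alpha+\beta}\bigr)=0,
\end{equation*}
and likewise for the remaining generator triples, so this is an involutive real-index-six triple not listed in the proposition. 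Since the proposition is phrased as an ``if'', this does not invalidate your proof of the stated implication, but you should not assert that the table is exhaustive; that stronger claim, as you wrote it, is wrong.
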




Thus we exhausted all possibilities for triples of invariant complex Dirac structures $(L_\alpha,L_\beta,L_\beta)$. 


At this point we are able to prove a converse of Theorem \ref{teoDirac}, that is, we can prove that there exists an invariant complex Dirac structure on $\mathbb{F}$ with constant real index equal to $2k$, with $k \in \{0,1,\cdots ,l\}$ where $l$ is the number of positive roots of the Lie algebra $\mathfrak{g}$ which defines the flag manifold $\mathbb{F}$.

\begin{theorem}
Let $\mathfrak{g}$ be a semisimple Lie algebra and consider $\mathbb{F}$ the maximal flag manifold associated to $\mathfrak{g}$. If $\mathfrak{g}$ has $l$ positive roots, then there exists an invariant complex Dirac structure on $\mathbb{F}$ with constant real index equal to $2k$, where $0\leq k \leq l$.
\end{theorem}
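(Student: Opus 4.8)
The plan is to exhibit, for each $k \in \{0,1,\ldots,l\}$, an explicit invariant complex Dirac structure of the form $L = \bigoplus_\alpha L_\alpha$ as in Proposition \ref{rootdecomposition}, in which exactly $k$ of the components $L_\alpha$ have real index two and the remaining $l-k$ have real index zero. Since the summands sit in the independent subspaces $\mathfrak{u}_\alpha \oplus \mathfrak{u}_\alpha^*$, conjugation respects this decomposition and $L \cap \overline{L} = \bigoplus_\alpha (L_\alpha \cap \overline{L_\alpha})$, so the real index of $L$ is the sum of the real indices of its components; with the stated count it equals $2k$, exactly as foreseen in Remark \ref{construirDirac}. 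Invariance and maximal isotropy are automatic as soon as each $L_\alpha$ is taken from the list of Proposition \ref{listaDirac}, so the entire difficulty is to arrange the components so that $\Nij|_L = 0$ on every triple $(L_\alpha,L_\beta,L_{\alpha+\beta})$.

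For the real-index-two slots I would always use case (2) of \eqref{listaDirac2}, namely $L_\alpha = (\mathfrak{u}_\alpha^*)_\mathbb{C}$, and for the real-index-zero slots I would always use case (3) with the uniform sign $\varepsilon_\alpha = +1$. This choice is dictated by Proposition \ref{2dualspaces}: any triple containing at least two dual components is automatically involutive. Hence the only triples that can obstruct involutivity are those in which exactly one of $\alpha,\beta,\alpha+\beta$ carries a case-(2) component while the other two are of case (3). By Propositions \ref{ccx} and \ref{xcc}, together with the symmetry $\alpha \leftrightarrow \beta$, such a triple is involutive precisely when the two case-(3) signs are equal if the lone case-(2) component sits at a summand position ($\alpha$ or $\beta$), and are opposite if it sits at the sum position $\alpha+\beta$. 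With uniform signs the first condition always holds, whereas the second always fails; so I must prevent the sum position from ever being the unique real-index-two slot of a triple.

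The key idea is to let the set $S$ of roots carrying a real-index-two component be a lower set (down-set) for the partial order on $\Pi^+$ in which $\gamma \leq \delta$ whenever $\delta-\gamma$ is a nonnegative integer combination of simple roots. In any triple one has $\alpha,\beta \leq \alpha+\beta$, so if $\alpha+\beta \in S$ then $\alpha,\beta \in S$ as well; consequently the sum position can never be the unique member of a triple lying in $S$, and the offending opposite-sign requirement never arises. Every remaining triple then either has at least two dual components (handled by Proposition \ref{2dualspaces}), or has its unique dual component at a summand position (handled by the equal-sign condition, which the uniform choice satisfies), or has all three components of case (3), for which the condition $\varepsilon_{\alpha+\beta} - \varepsilon_\alpha - \varepsilon_\beta + \varepsilon_\alpha \varepsilon_\beta \varepsilon_{\alpha+\beta} = 0$ reduces to $1-1-1+1 = 0$. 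Thus every such $L$ is involutive.

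It remains only to produce, for each $k$, a down-set $S \subseteq \Pi^+$ with $|S| = k$, a step I expect to be routine but essential. Listing the positive roots by nondecreasing height $\gamma_1,\ldots,\gamma_l$ yields a linear extension of the root order, since $\gamma \leq \delta$ forces $\mathrm{ht}(\gamma) \leq \mathrm{ht}(\delta)$; the initial segment $S = \{\gamma_1,\ldots,\gamma_k\}$ is then a down-set of cardinality $k$ for every $k$. The associated $L$ is an invariant complex Dirac structure of constant real index $2k$, proving the theorem. The main conceptual obstacle is precisely the incompatibility of the uniform signs with the sum-position configuration of Proposition \ref{ccx}; the down-set choice is what resolves it, collapsing all involutivity conditions to ones that uniform signs satisfy.
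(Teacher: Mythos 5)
Your construction is correct and is essentially the paper's own proof: the paper likewise places the dual components $(\mathfrak{u}_\alpha^\ast)_\mathbb{C}$ on the $k$ positive roots of lowest height (an initial segment for the height order, i.e.\ exactly your down-set) and a single uniform-sign component on all remaining roots, so that the sum position $\alpha+\beta$ of a triple can never carry the lone real-index-two summand. Your write-up is in fact the cleaner of the two, since you say explicitly that the complementary components are of complex type (case (3)) with $\varepsilon\equiv +1$ --- which is what the paper's phrase ``type 1 with same sign'' must mean, as type 1 carries no sign and has real index two --- and you check each surviving triple against Propositions \ref{ccx}, \ref{xcc}, \ref{2dualspaces} and the identity $1-1-1+1=0$ for $(3,3,3)$.
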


\begin{proof}
Let us denote by $d_i$ the number of positive roots of height $i$, where $d_1$ is the number of simple roots, $m$ is the highest height and $d_1+d_2+\cdots +d_m = l$. We want to construct an invariant complex Dirac structure $L$ with real index $2k$, with $0\leq k \leq l$.

If $k=0$ we have that $L$ comes from a generalized complex structure, whose existence follows from \cite{VS}.

In what follows we look at complex Dirac structures of type A and B according to Proposition \ref{listaDirac}. If $0<k\leq d_1$, then for $k$ simple roots we consider $L_{\alpha_1}, \cdots, L_{\alpha_k}$ of type B, and $L_\beta$ of type A with same sign for all other positive root $\beta$.

If $d_1 < k \leq d_2$, then for all simple root $\alpha$ we consider $L_\alpha$ of type B, for $k-d_1$ roots of height 2 we also consider $L_{\alpha}$ of type B and, for the orther roots, we consider $L_\beta$ of type A with same sign for all other positive root $\beta$.

Proceeding iteratively, if $d_i < k \leq d_{i+1}$ then we consider $L_\alpha$ of type B for all root of height $1,2,\cdots, d_i$, we also consider $L_\alpha$ of type B for $k-(d_1+d_2+\cdots +d_i)$ and, for the other roots, we consider $L_\beta$ of type A with same sign for all other positive root $\beta$.

Now, let us observe that $L$ constructed like above is involutive. In fact, given a triple of roots $(\alpha,\beta, \alpha+\beta)$ then the possible associated triple of subspaces are $(A,A,A)$, $(B,A,A)$, $(A,B,A)$, $(B,B,A)$ and $(B,B,B)$ which are always involutive thanks to the computations done in the previous subsections. In fact, the construction of $L$ does not allow us to have  triples of subspaces $(A,A,B)$, $(B,A,B)$ or $(A,B,B)$, because the root $\alpha+\beta$ is higher than $\alpha$ and $\beta$.
\end{proof}

\section{Classification up to $B$-transformation}\label{sectionBtransformation}



In \cite{AR} was presented a classification up to $B$-transformations of complex Dirac structures in terms of its real index, order and type. Before defining order and type, we need to fix some notation. 

Given $L \subset (V\oplus V^\ast)_ \mathbb{C}$ a complex Dirac structure on $V$, we obtain the following subspaces
\begin{align}\label{notsubspaces}
K = \textnormal{Re} (L\cap \overline{L}) \subseteq V\oplus V^\ast, \ \qquad E = pr_{V_\mathbb{C}}L \subseteq V_ \mathbb{C} \nonumber \\
\Delta = \textnormal{Re}(E\cap \overline{E}) \subseteq V, \ \qquad D=\textnormal{Re}(E+\overline{E}) \subseteq V. 
\end{align}
Note that $E$ is the range and the other subspaces are real defined from $L$. Moreover, recall that there exists a complex $2$-form $\varepsilon\in \wedge^2 E^\ast$ such that 
\[
L = L(E,\varepsilon) = \{ X+\xi \in E\oplus V^\ast \ | \ \xi|_E = \iota_X \varepsilon \}.
\]
Consider the real $2$-form 
\[
\omega_\Delta \colon =\textnormal{Im}(\varepsilon)|_\Delta \in \wedge^2 \Delta^\ast,
\]
where $\textnormal{Im}(\varepsilon)$ means the imaginary part of $\varepsilon$.

Now, we are able to define the type and order:

\begin{definition}
Let $L$ be a linear complex Dirac structure. The {\bf order} of $L$ is defined as 
\[
\textnormal{order}(L) = \textnormal{codim}D.
\] 
\end{definition}

The order is always less than or equal to the real index of the linear complex Dirac structure. In particular, in a generalized complex structure the order is always zero.

\begin{definition}
The {\bf type} of a complex Dirac structure $L$ is 
\[
\textnormal{type}(L)=\textnormal{dim}(E+\overline{E}) - \textnormal{dim}E.
\]
\end{definition}

Once we have fixed the notation we can present the classification done by Agüero and Rubio in \cite{AR}. This classification has as key ingredients the associated presymplectic subspace $(\Delta,\omega_\Delta)$ and a certain complex structure on $D/\Delta$, and states that a complex Dirac structure is (up to $B$-transformations) the product of $(\Delta,\omega_\Delta)$ with the complex structure $D/\Delta$. 

\begin{proposition}[\cite{AR}]
Let $L$ be a complex Dirac structure with real index $r$ and order $s$. Then $L$ is isomorphic to a $B$-transformation of the product of a complex Dirac structure defined by a presymplectic structure with $(r-s)$-dimensional kernel with a complex Dirac structure defined by a codimension-$s$ CR structure.
\end{proposition}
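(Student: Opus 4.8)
The plan is to encode $L$ by the pair $(E,\varepsilon)$, read off the two building blocks from the canonical filtration $\Delta \subseteq D \subseteq V$ of (\ref{notsubspaces}), and then use a real $B$-field to split $L$ as an honest product of these blocks. First I would recall that $L = L(E,\varepsilon)$ is determined by its range $E = pr_{V_\mathbb{C}} L$ together with $\varepsilon \in \wedge^2 E^\ast$. Since $E\cap \overline{E}$ and $E+\overline{E}$ are conjugation-invariant, they are the complexifications $\Delta_\mathbb{C}$ and $D_\mathbb{C}$ of $\Delta$ and $D$; consequently the image $\widehat{E}$ of $E$ in $(D/\Delta)_\mathbb{C}$ satisfies $\widehat{E}\cap \overline{\widehat{E}} = 0$ and $\widehat{E}+\overline{\widehat{E}} = (D/\Delta)_\mathbb{C}$, so it defines a complex structure on $D/\Delta$. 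Together with $\textnormal{codim}_V D = s$, this yields the codimension-$s$ CR structure on $V/\Delta$ whose range is the image of $E$. On the remaining directions the presymplectic datum is the real $2$-form $\omega_\Delta = \textnormal{Im}(\varepsilon)|_\Delta$.

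Second, I would decouple these two pieces with a $B$-transformation. A real $2$-form $B \in \wedge^2 V^\ast$ acts by $L(E,\varepsilon) \mapsto L(E, \varepsilon + B|_E)$, where $B|_E$ is the restriction to $E$ of the $\mathbb{C}$-bilinear extension of $B$. Fixing splittings $D = \Delta \oplus C$ and $V = D\oplus W$ with $\dim W = s$, the goal is to choose $B$ so that $\varepsilon + B|_E$ acquires zero real part and no terms mixing $\Delta$ with $C\oplus W$; a purely imaginary $\varepsilon$ then restricts to $i\omega_\Delta$ on $\Delta$ and to $0$ on the complex-structure directions, which is exactly the normal form in which the vector datum $E$ and the form datum $\varepsilon$ both respect the decomposition $V = \Delta \oplus (C\oplus W)$.

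Third, with $\varepsilon$ in this normal form I would check that $L$ is literally the direct sum $L_{\omega_\Delta} \oplus L_{\mathrm{CR}}$, where $L_{\omega_\Delta}$ is the complex Dirac structure defined by the presymplectic space $(\Delta, \omega_\Delta)$ and $L_{\mathrm{CR}}$ is the Dirac structure of the codimension-$s$ CR structure on $C\oplus W$. It remains to pin down the kernel dimension, for which I would compute $L\cap \overline{L}$ directly from $L = L(E,\varepsilon)$: an element lies in the intersection exactly when its vector part sits in $\Delta_\mathbb{C}$ and is annihilated by $\textnormal{Im}(\varepsilon)$, contributing $\ker \omega_\Delta$, together with an $s$-dimensional purely covector contribution from $\Ann(D)$. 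This gives the identity $r = \dim \ker \omega_\Delta + s$, so $\ker \omega_\Delta$ has dimension $r-s$, as claimed.

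The hard part will be the $B$-field normalization of the second step. One must show that the part of $\varepsilon$ obstructing the product decomposition lies in the image of the restriction map $\wedge^2 V^\ast \to \wedge^2 E^\ast$ applied to real forms, and that a single real $B$ can simultaneously cancel $\textnormal{Re}(\varepsilon)$ and the cross terms between $\Delta$ and its complement in a way compatible with the filtration $\Delta\subseteq D \subseteq V$. Once this extension problem is resolved, the direct-sum decomposition and the dimension bookkeeping are routine; the interplay between the reality constraint on $B$ and the complex structure on $D/\Delta$ is the genuinely delicate point.
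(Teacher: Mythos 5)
You should first note that the paper itself offers no proof of this statement: it is imported verbatim from Aguero--Rubio \cite{AR} (see also \cite{A}), so there is no internal argument to compare against. Judged on its own terms, your outline reconstructs the strategy of \cite{AR} correctly in its verifiable parts. The identifications $E\cap\overline{E}=\Delta_{\mathbb C}$ and $E+\overline{E}=D_{\mathbb C}$, the observation that the image $\widehat{E}$ of $E$ in $(D/\Delta)_{\mathbb C}$ satisfies $\widehat{E}\cap\overline{\widehat{E}}=0$ and $\widehat{E}+\overline{\widehat{E}}=(D/\Delta)_{\mathbb C}$ (if $e\in E$ maps into $\overline{\widehat{E}}$ then $e\in E\cap\overline{E}=\Delta_{\mathbb C}$), and the real-index bookkeeping are all sound: an element $X+\xi$ of $L\cap\overline{L}$ must have $X\in\Delta_{\mathbb C}$ with $\iota_X\varepsilon$ and $\iota_X\overline{\varepsilon}$ agreeing on $\Delta_{\mathbb C}$, i.e.\ $X\in(\ker\omega_\Delta)_{\mathbb C}$, and for each such $X$ the covector is determined on $D_{\mathbb C}$ up to $\Ann(D_{\mathbb C})$, giving $r=\dim\ker\omega_\Delta+s$ and hence the $(r-s)$-dimensional kernel.

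The genuine gap is the one you flag yourself: the $B$-field normalization in your second step is not an auxiliary technicality but is essentially the entire content of the proposition, and your proposal states it as a goal rather than proving it. Concretely, you must show that there exist splittings $D=\Delta\oplus C$, $V=D\oplus W$ and a \emph{real} $B\in\wedge^2V^\ast$ such that $\varepsilon+B|_E$ vanishes on the lift of $\widehat{E}$ and restricts to $i\omega_\Delta$ on $\Delta_{\mathbb C}$, with no cross terms. The difficulty is that $E$ is not the complexification of a real subspace, so ``killing the real part of $\varepsilon$'' is not a well-posed operation on all of $E$; one has to decompose $\varepsilon$ relative to $E\supseteq\Delta_{\mathbb C}$ and the chosen complement, verify that the obstructing components descend to forms on real quotients (this is where $E+\overline{E}=D_{\mathbb C}$ is used), and then solve the extension problem for a single real $2$-form compatible with the filtration $\Delta\subseteq D\subseteq V$. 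This is the lemma-level work carried out in \cite{AR} (and, for the real-index-zero case, in Gualtieri's thesis \cite{G2}); until it is supplied, your third step (the literal direct-sum decomposition $L=L_{\omega_\Delta}\oplus L_{\mathrm{CR}}$) has nothing to rest on. So the proposal is a correct roadmap but not yet a proof.
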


\subsection{Classification on flag manifolds}

We have proved that an invariant complex Dirac structure $L$ on a flag manifold can be decomposed in terms of the root spaces, that is, $L = \oplus_\alpha L_\alpha$ where $L_\alpha$ is a complex structure on $\mathfrak{u}_\alpha$. Moreover, we described every possibility for these subspaces $L_\alpha$ in (\ref{listaDirac2}). Therefore, using the real index, order and type of each  invariant complex Dirac structure presented in (\ref{listaDirac2}), we can classify these structures up $B$-transformation.

\begin{proposition}\label{propBtransfom}
Let $L_\alpha$ be an invariant complex Dirac structure on $\mathfrak{u}_\alpha$. Then, up to $B$-transformations, we have only four possibilities for $L_\alpha$.
\begin{itemize}
\item[(a)] $L_\alpha = (\mathfrak{u}_\alpha)_\mathbb{C}$, that is, $L_\alpha$ is the root space associated to the root $\alpha$;

\item[(b)]  $L_\alpha = (\mathfrak{u}^\ast _\alpha)_\mathbb{C}$, that is, $L_\alpha$ is the dual of the root space associated to the root $\alpha$;

\item[(c)] $L_\alpha = \textnormal{span}_\mathbb{C}\{ A_\alpha + \varepsilon_\alpha iS_\alpha , A_\alpha ^\ast + \varepsilon_\alpha i S^\ast _\alpha \}$ with $\varepsilon_\alpha = \pm 1$, that is, $L_\alpha$ is the $i$-eigenspace of an invariant generalized complex structure on $\mathfrak{u}_\alpha$ of complex type.

\item[(d)] $L_\alpha = \textnormal{span}_\mathbb{C}\{ A_\alpha + \frac{i}{x_\alpha} A_\alpha ^\ast, S_\alpha + \frac{i}{x_\alpha} S^\ast _\alpha \}$ with $x_\alpha,a_\alpha \in \mathbb{R}$ and $x_\alpha \neq 0$, that is, $L_\alpha$ is the $i$-eigenspace of an invariant generalized complex structure on $\mathfrak{u}_\alpha$ of symplectic type.
\end{itemize}
In other words, if we denote by $\textnormal{Dir}_\mathbb{C} (\mathfrak{u}_\alpha)$ the set of all invariant complex Dirac structures on $\mathfrak{u}_\alpha$, then 
\[
\frac{\textnormal{Dir}_\mathbb{C}(\mathfrak{u}_\alpha)}{B\textnormal{-fields}} = \{ (a),(b),(c),(d)\}.
\]
\end{proposition}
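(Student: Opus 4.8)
The plan is to take the four invariant complex Dirac structures listed in \eqref{listaDirac2}, namely cases $(1)$, $(2)$, $(3)$, $(4.1)$ and $(4.2)$, and compute for each of them the three $B$-transformation invariants recalled above: the real index, the order, and the type. Since the classification of Ag\"uero and Rubio asserts that a linear complex Dirac structure is determined up to $B$-transformation by the presymplectic subspace $(\Delta,\omega_\Delta)$ together with the induced complex structure on $D/\Delta$, it suffices to identify, for each case, the subspaces $E = pr_{(\mathfrak{u}_\alpha)_\mathbb{C}}L_\alpha$, $\Delta = \textnormal{Re}(E\cap\overline{E})$, $D = \textnormal{Re}(E+\overline{E})$ and $K = \textnormal{Re}(L_\alpha\cap\overline{L_\alpha})$ explicitly inside the four-dimensional space $(\mathfrak{u}_\alpha\oplus\mathfrak{u}_\alpha^\ast)\otimes\mathbb{C}$, and then read off the numerical invariants.

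The computation proceeds case by case. For case $(1)$, $L_\alpha=(\mathfrak{u}_\alpha)_\mathbb{C}$, we have $E=(\mathfrak{u}_\alpha)_\mathbb{C}$, so $\Delta=D=\mathfrak{u}_\alpha$, giving order $0$ and type $0$; this is a genuine (real) Dirac structure of symplectic/tangent type and is already in normal form, yielding possibility $(a)$. For case $(2)$, $L_\alpha=(\mathfrak{u}_\alpha^\ast)_\mathbb{C}$, the range $E$ is zero, so $\Delta=D=0$, and the order equals the full dimension; this is the cotangent/dual type and gives $(b)$. For case $(3)$, the structure is already the $i$-eigenspace of a generalized complex structure of complex type, so its real index is zero and it coincides with $(c)$ without any $B$-transformation needed. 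The substantive computation is in cases $(4.1)$ and $(4.2)$: here I would show that $(4.1)$, which has real index two but $\tfrac{b_1}{a_1}\in\mathbb{R}$, can be brought by a real $B$-field precisely to one of $(a)$, $(b)$, or $(d)$ depending on the presymplectic data, and that $(4.2)$ — the symplectic-type generalized complex structure — reduces to the stated normal form $\textnormal{span}_\mathbb{C}\{A_\alpha+\tfrac{i}{x_\alpha}A_\alpha^\ast,\,S_\alpha+\tfrac{i}{x_\alpha}S_\alpha^\ast\}$ by applying the $B$-transformation that eliminates the real part $a_\alpha$ of the coefficient $(a_\alpha-i)$, exactly as in the reduction already invoked after the real-index analysis, producing $(d)$.

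The main step is therefore to exhibit, for the two genuinely nontrivial families, an explicit $B$-field $B\in\wedge^2\mathfrak{u}_\alpha^\ast$ whose action $X+\xi\mapsto X+\xi+\iota_X B$ carries the given $L_\alpha$ to the claimed representative, and to verify that the invariants (real index, order, type) of the representative match those of $L_\alpha$ so that no two of $(a)$--$(d)$ are $B$-equivalent to one another. The hard part will be the bookkeeping for case $(4.1)$: one must check that the real condition $\tfrac{b_1}{a_1}\in\mathbb{R}$ forces the associated $2$-form $\varepsilon$ to have vanishing imaginary part on $\Delta$ in the appropriate way, so that after a $B$-transformation the structure collapses to a tangent, cotangent, or symplectic-type representative rather than to a genuinely new one. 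Once each of the five cases of \eqref{listaDirac2} is matched to exactly one of $(a)$, $(b)$, $(c)$, $(d)$, and the invariants are seen to distinguish the four representatives, the identification $\textnormal{Dir}_\mathbb{C}(\mathfrak{u}_\alpha)/B\textnormal{-fields}=\{(a),(b),(c),(d)\}$ follows immediately.
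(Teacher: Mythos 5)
Your overall strategy is the same as the paper's: compute the $B$-invariants $K$, $E$, $\Delta$, $D$ (hence real index, order and type) for each of the five cases in the list and match representatives via explicit $B$-fields, invoking the Ag\"uero--Rubio classification. Cases $(1)$, $(2)$, $(3)$ and $(4.2)$ are handled as the paper does. However, there is a concrete gap at the one point where the proposition has real content: you leave open whether case $(4.1)$ reduces to $(a)$, $(b)$ or $(d)$ ``depending on the presymplectic data.'' Two of those three options are impossible a priori from the very invariants you propose to compute. A $B$-transformation $X+\xi\mapsto X+\xi+\iota_X B$ preserves the range $E=pr_{(\mathfrak{u}_\alpha)_\mathbb{C}}L_\alpha$ and preserves the real index. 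Case $(4.1)$ has $E=(\mathfrak{u}_\alpha)_\mathbb{C}$ (full range, since $a_1\neq 0$) and real index two, whereas $(b)$ has $E=\{0\}$ (order two) and $(d)$ has real index zero; so the only candidate is $(a)$, and the question is only whether the reduction to $(a)$ can actually be realized.

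It can, and your own observation about the imaginary part of $\varepsilon$ is exactly the mechanism: writing $L_\alpha=L\bigl((\mathfrak{u}_\alpha)_\mathbb{C},\varepsilon\bigr)$ with $\varepsilon=\frac{b_1}{a_1}(-S_\alpha^\ast)\wedge A_\alpha^\ast$, the hypothesis $\frac{b_1}{a_1}\in\mathbb{R}$ says precisely that $\varepsilon$ is a \emph{real} $2$-form, i.e.\ itself a $B$-field; then $e^{-B}\cdot L\bigl((\mathfrak{u}_\alpha)_\mathbb{C},\varepsilon\bigr)=L\bigl((\mathfrak{u}_\alpha)_\mathbb{C},0\bigr)=(\mathfrak{u}_\alpha)_\mathbb{C}$, which is $(a)$. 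This is exactly the computation the paper performs. You should also state explicitly (rather than only implicitly via ``the invariants distinguish the four representatives'') that $(a)$, $(b)$, $(c)$, $(d)$ are pairwise inequivalent: $(c)$ and $(d)$ have real index zero while $(a)$ and $(b)$ have real index two, $(a)$ and $(b)$ are separated by the order ($0$ versus $2$), and $(c)$, $(d)$ are separated by the type, as in the generalized complex case. With the case $(4.1)$ step pinned down as above, your argument closes and coincides with the paper's proof.
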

\begin{proof}
Note that the subspaces described by Case A and Case D1 in (\ref{listaDirac2}) have the same real index, order and type. In fact, if $L_\alpha = \textnormal{span}_\mathbb{C}\{ A_\alpha,S_\alpha\} = (\mathfrak{u}_\alpha)_\mathbb{C}$, then the subspaces described in (\ref{notsubspaces}) are given by $K = \Delta = D = \mathfrak{u}_\alpha$ and $E = (\mathfrak{u}_\alpha)_\mathbb{C}$. Therefore, we have a complex Dirac structure with real index $2$, order $0$ and type $0$. In the same way, if $L_\alpha = \textnormal{span}_\mathbb{C}\{ a_\alpha A_\alpha + b_\alpha A_\alpha ^\ast, a_\alpha S_\alpha + b_\alpha S^\ast _\alpha \}$ with $\frac{b_\alpha}{a_\alpha} \in \mathbb{R}$, we have that $K = L_\alpha$, $\Delta = D = \mathfrak{u}_\alpha$ and $E = (\mathfrak{u}_\alpha)_\mathbb{C}$. Thus, we have a complex Dirac structure with real index $2$, order $0$ and type $0$.

Since these cases have the same real index, type and order, they are the same subspaces up to $B$-transformation. In fact, we have that Case A is given by $L((\mathfrak{u}_\alpha)_\mathbb{C},0)$ and Case D1 is given by $L((\mathfrak{u}_\alpha)_\mathbb{C}, \varepsilon)$ where $\varepsilon = \frac{b_\alpha}{a_\alpha} (-S^\ast _\alpha)\wedge A^\ast _\alpha$ with $\frac{b_\alpha}{a_\alpha} \in \mathbb{R}$. Then, using the fact that 
\[
e^B\cdot L(E,\varepsilon) = L(E,\varepsilon +\iota ^\ast B),
\]
consider $B \in \wedge^2 (\mathfrak{u}_\alpha)_\mathbb{C} ^\ast$ such that $\iota ^\ast B = \varepsilon$ and we can conclude that 
\[
e^B\cdot L((\mathfrak{u}_\alpha)_\mathbb{C},0) = L((\mathfrak{u}_\alpha)_\mathbb{C},0 +\iota ^\ast B) = L((\mathfrak{u}_\alpha)_\mathbb{C},\varepsilon).
\]

Note that, for Case B in (\ref{listaDirac2}) given by $L_\alpha = \textnormal{span}_\mathbb{C}\{ A_\alpha ^\ast ,S_\alpha ^\ast\} = (\mathfrak{u}_\alpha ^\ast)_\mathbb{C}$, we have $K= \mathfrak{u}_\alpha ^\ast$ and $E = \Delta = D = \{ 0 \}$, obtaining a complex Dirac structure with real index $2$, order $2$ and type $0$. 

Finally, we already know from \cite{GVV} how Case C and Case D2 in (\ref{listaDirac2}) behave under the action of $B$-transformation. Case C are fixed points of $B$-transformations and Case D2 is a $B$-transformation of the $i$-eigenspace of a symplectic structure.
\end{proof}	

\begin{remark}
For each possible case on Proposition \ref{propBtransfom}, we can calculate the invariants given by the real index, order and type, as we can see below.
\begin{table}[htb!]
\begin{center}
\begin{tabular}{c|c|c|c}
 Case & real index & order & type \\ \hline
(a) & 2 & 0 & 0\\
(b) & 2 & 2 & 0  \\
(c) & 0 & 0 & 2  \\
 (d) & 0 & 0 & 0 
\end{tabular} 
\end{center}
\end{table}
\FloatBarrier

Consider $L=\sum_{\alpha >0} L_\alpha$ an invariant complex Dirac structure of real index $2k$, order $2r$ and type $2l$ on a $2n$-dimensional flag manifold $\mathbb{F}$. Then, using the notation of Proposition \ref{propBtransfom}, since the real index of $L$ is $2k$ we must have $k$ invariant complex Dirac structures of cases $(a)$ and $(b)$ and $(n-k)$ invariant complex Dirac structures of cases $(c)$ and $(d)$. Moreover, the order equal to $2r$ gives us that we must have exactly $r$ invariant complex Dirac structures of case $(b)$ and the type $2l$ implies that we have $l$ invariant complex Dirac structures of case $(c)$. Therefore, to obtain an invariant complex Dirac structure $L$ of real index $2k$, order $2r$ and type $2l$ we must have:
\begin{itemize}
\item $k-r$ invariant complex Dirac structures $L_\alpha$ of case $(a)$;
\item $ r$ invariant complex Dirac structures $L_\alpha$ of case $(b)$;
\item $l$ invariant complex Dirac structures $L_\alpha$ of case $(c)$;
\item $n-k-l$ invariant complex Dirac structures $L_\alpha$ of case $(d)$.
\end{itemize}
But it is important to emphasize that the conditions of involutivity presented on Section \ref{involutivity} must always be verified for each triple of roots $(\alpha,\beta,\alpha+\beta)$.
\end{remark}

\section{Examples}\label{secexample}

In this section we will provide some examples of complex Dirac structures for some flag manifolds. Remember that the possible invariant complex Dirac structures on $\mathfrak{u}_\alpha$, where $\alpha$ is a positive root, were described in (\ref{listaDirac2}). In the next examples we consider the flag manifolds associated with the Lie algebras of $\mathfrak{sl}(2,\mathbb{C})$ and $\mathfrak{sl}(3,\mathbb{C})$ due to the low number of roots, which allows us to completely describe the invariant complex Dirac structures in these cases.

\begin{example}
Let $\mathfrak{g} = \mathfrak{sl}(2,\mathbb{C})$, then we have $\Sigma = \Pi^+ = \{ \alpha\}$ and the associated maximal flag manifold is $\mathbb{F}=\mathbb{P}^1$. Since we have only one positive root for $\mathfrak{g}$, then we may have only invariant complex Dirac structures of real index zero or two.

\vspace{.2cm}
\noindent {\bf Real index zero.} If $L_\alpha$ is an invariant complex Dirac structure on $\mathbb{F}$ with real index zero, then $L_\alpha$ is given by an invariant generalized complex structure defined either by a complex structure or by a $B$-transformation of a symplectic form. 

\vspace{.2cm}
\noindent {\bf Real index two.} Now if $L_\alpha$ is an invariant Dirac structure on $\mathbb{F}$ with real index two, then we have three possibilities:
\begin{itemize}
\item The tangent space at the origin, that is, $L_\alpha = (\mathfrak{u}_\alpha)_ \mathbb{C}$; 

\item The cotangent space at the origin, that is, $L_\alpha = (\mathfrak{u}_\alpha ^\ast)_ \mathbb{C}$; 

\item $L_\alpha = \textnormal{span}_\mathbb{C}\{ a_1A_\alpha+ b_1A^\ast _\alpha, a_1S_\alpha + b_1S^\ast _\alpha\}$ with $a_1,b_1 \neq 0$ and $\frac{b_1}{a_1} \in \mathbb{R}$ 
\end{itemize}
We saw in Section \ref{sectionBtransformation} that, up to $B$-transformation, in this case we have only the tangent and cotangent spaces at the origin as invariant complex Dirac structures with real index two.

\end{example}

\begin{example}
 Consider the Lie algebra $\mathfrak{g} = \mathfrak{sl}(3,\mathbb{C})$, then we have $\Sigma = \{ \alpha,\beta\}$ is a simple root system and the corresponding set of positive roots is $\Pi^+ =\{ \alpha,\beta,\alpha+\beta\}$. The maximal flag manifold is given by $\mathbb{F} = \mathbb{F}(1,2)\subset \mathbb{P}^2\times \mathbb{P}^2$ where
 \[
 \mathbb{F}(1,2) = \{ ([x_1,x_2,x_3],[y_1,y_2,y_3])\in \mathbb{P}^2\times \mathbb{P}^2 \ : \ x_1y_1+x_2y_2+x_3y_3 = 0\}.
 \]
We can have invariant Dirac structures on $\mathbb{F}$ with real index zero, two, four or six. Let us present the possibilities according to the real index.

\vspace{.2cm}
\noindent {\bf Real index zero.} These are exactly the invariant complex Dirac structures on $\mathbb{F}$ that come from invariant generalized complex structures. 

\vspace{.2cm}
\noindent {\bf Real index two.} We can see on Section \ref{ritwo} the possible invariant complex Dirac structures on $\mathbb{F}$ with real index two where each case has an involutivity condition involving the signals for structures of type C, the constants $a,b$ for type D1 or the constants $x,a$ for type D2. Then the possible invariant complex Dirac structures are
\begin{table}[htb!]
\begin{center}
\begin{tabular}{c|c|c}
$L_\alpha$ & $L_\beta$ & $L_{\alpha + \beta}$ \\ \hline
A {\scriptsize{$\vee$}} B {\scriptsize{$\vee$}} D1 & C & C \\
A & D2 & D2 \\
D1 & D2 & D2 
\end{tabular} 
\end{center}
\end{table}
\FloatBarrier
\noindent where $\varepsilon_\beta = \varepsilon_{\alpha+\beta}$ for the first row. For $(A,D2,D2)$ we must have $x_\beta = x_{\alpha+\beta}$ and $a_\beta = a_{\alpha+\beta}$ and, finally, for $(D1,D2,D2)$ we must have $x_\beta = x_{\alpha+\beta}$ and $\frac{b_\alpha}{a_\alpha} = \frac{a_{\alpha+\beta}}{x_{\alpha+\beta}} - \frac{a_\beta}{x_\beta}$.
 
\vspace{.2cm}
\noindent {\bf Real index four.} The invariant complex Dirac structures on $\mathbb{F}$ with real index four was described on Section \ref{rifour}. Different from the previous cases, here we do not have algebraic condition for the involutivity, since the cases always contain two invariant complex Dirac structure of type B and these cases were described on Proposition \ref{2dualspaces}. More specifically
 \begin{table}[htb!]
\begin{center}
\begin{tabular}{c|c|c}
$L_\alpha$ & $L_\beta$ & $L_{\alpha + \beta}$ \\ \hline
B & B & C {\scriptsize{$\vee$}} D2 \\
B & C {\scriptsize{$\vee$}} D2 & B 
\end{tabular} 
\end{center}
\end{table}
\FloatBarrier

\vspace{.2cm}
\noindent {\bf Real index six.} In this case the components $L_\delta$ have real index two, which means that none of them come from a generalized complex structure. The possible combinations for $L$ to be involutive are:
\begin{table}[htb!]
\begin{center}
\begin{tabular}{c|c|c}
$L_\alpha$ & $L_\beta$ & $L_{\alpha + \beta}$ \\ \hline
A & A & A \\
B & A & B \\
B & B & A \\
B & B & B \\
B & D1 & B \\
B & B & D1 \\
D1 & D1 & D1
\end{tabular} 
\end{center}
\end{table}
\FloatBarrier
\noindent where the first and fourth cases are the tangent and cotangent spaces at the origin of $\mathbb{F}$, respectively. The cases $(B,A,B)$, $(B,B,A)$, $(B,D1,B)$ and $(B,B,D1)$ are described in Proposition \ref{2dualspaces}. The case $(D1,D1,D1)$ must satisfy $\frac{b_{\alpha+\beta}}{a_{\alpha+\beta}}=\frac{b_\beta}{a_\beta} + \frac{b_\alpha}{a_\alpha}$.
\end{example}

\end{document}